\tikzset{snake it/.style={decorate, decoration=snake}}
\newtheorem{theorem}{Theorem}[section]
\newtheorem{lemma}[theorem]{Lemma}
\newtheorem{corollary}[theorem]{Corollary}
\theoremstyle{definition}
\newtheorem{definition}[theorem]{Definition}
\newtheorem{example}[theorem]{Example}
\theoremstyle{remark}
\newtheorem{remark}[theorem]{Remark}
\numberwithin{equation}{section}
\newcommand{\abs}[1]{\lvert#1\rvert}
\newcommand{\CC}{\mathbb{C}}
\newcommand{\ZZ}{\mathbb{Z}}
\newcommand{\QQ}{\mathbb{Q}}
\newcommand{\RR}{\mathbb{R}}
\newcommand{\Ham}{\mathrm{Ham}}
\newcommand{\Hofer}{\mathrm{Hofer}}
\begin{document}

\title{Spectral diameter of negatively monotone manifolds}

\author{Yuhan Sun}
\address{Department of Mathematics, Imperial College London, South Kensington, London, SW7 2AZ, UK}
\email{yuhan.sun@imperial.ac.uk}

\begin{abstract}
For a closed negatively monotone symplectic manifold, we construct quasi-isometric embeddings from the Euclidean spaces to its Hamiltonian diffeomorphism group, assuming it contains an incompressible heavy Lagrangian. Moreover, if it is integral, we show the super-heaviness of its skeleton with respect to a Donaldson hypersurface.
\end{abstract}

\maketitle

\tableofcontents


\section{Introduction}

Given a closed symplectic manifold $(M, \omega)$, the group of Hamiltonian diffeomorphisms $\Ham(M,\omega)$ is a central object of research in symplectic topology. This group is endowed with the Hofer norm
$$
\abs{\cdot}_\Hofer :\Ham(M,\omega)\to \RR_{\geq 0},
$$
by Hofer \cite{Hofer}, and the spectral norm
$$
v:\Ham(M,\omega)\to \RR_{\geq 0},
$$
by Schwarz \cite{Sc} and Oh \cite{Oh}. A main question related to the Hofer norm is whether it is an unbounded function for any closed symplectic manifold, and when it is, which unbounded group admits a quasi-isometric embedding to it. These are usually referred to as the Hofer diameter conjecture \cite[Problem 20]{MS} and the Kapovich-Polterovich question \cite[Problem 21]{MS} respectively. Similarly, one can ask the same question about the spectral norm.

In this article, we explore these questions for \textit{negatively monotone} symplectic manifolds. That is, $c_{1}(TM)\mid_{\pi_{2}(M)}=\tau\cdot\omega\mid_{\pi_{2}(M)}$ for some negative constant $\tau$.

A subset $K$ of $M$ is called incompressible if the inclusion-induced map $\pi_{1}(K)\to \pi_{1}(M)$ is injective. Particularly, any simply-connected subset $K$ is incompressible. Our main result is

\begin{theorem}\label{t:main}
    Let $(M^{2n},\omega)$ be a closed negatively monotone symplectic manifold with $n\geq 3$. If $M$ contains an incompressible heavy Lagrangian, then for any positive integer $N$ there is a group homomorphism
    $$
    I:(\RR^N, \abs{\cdot}_\infty)\to \Ham(M,\omega),
    $$
    satisfying
    $$
    \abs{x}_\infty\leq v(I(x))\leq \abs{I(x)}_{\Hofer}\leq  2\abs{x}_\infty.
    $$
\end{theorem}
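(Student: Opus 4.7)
The plan is to build $I$ from $N$ autonomous Hamiltonians with pairwise disjoint supports and to estimate the spectral norm of the output via a partial quasi-state attached to the heavy Lagrangian $L$. I would fix pairwise disjoint open subsets $U_1,\dots,U_N\subset M$, and inside each $U_i$ a bump function $F_i:M\to\RR$ supported in $U_i$, of oscillation at most $2$, taking the value $1$ on a prescribed compact subset $W_i\Subset U_i$. Since the $F_i$ have disjoint supports, the time-one maps pairwise commute, and
$$
I(t_1,\dots,t_N):=\phi^1_{t_1F_1}\circ\cdots\circ\phi^1_{t_NF_N}
$$
is a group homomorphism from $\RR^N$. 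The Hofer bound $|I(x)|_{\Hofer}\leq 2|x|_\infty$ is immediate from the Hofer triangle inequality and $\mathrm{osc}(F_i)\leq 2$, and the middle inequality $v\leq|\cdot|_{\Hofer}$ is the Oh--Schwarz inequality.

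The core is the lower bound $|x|_\infty\leq v(I(x))$. The plan is to exploit a Lagrangian spectral invariant / partial symplectic quasi-state $\zeta_L$ built from $L$ and to arrange $N$ pairwise disjoint Hamiltonian images $L_i:=\psi_i(L)\subset W_i$. Each $L_i$ remains heavy by Hamiltonian invariance of heaviness. On $L_i$ the autonomous generating Hamiltonian $H=\sum_j t_jF_j$ is identically equal to $t_i$, so the heaviness estimate $\zeta_{L_i}(H)\geq\int_0^1\min_{L_i}H_t\,dt$, combined with the standard bound $v(\phi^1_H)\geq\zeta_{L_i}(H)-\zeta_{L_i}(\bar H)$ (with $\bar H$ the time-reverse), yields $v(I(x))\geq|t_i|$; maximizing over $i$ gives the claim.

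The hypotheses $n\geq 3$ and incompressibility of $L$ enter in two places. First, they are used to produce $N$ pairwise disjoint Hamiltonian copies of $L$ inside prescribed small neighborhoods: the codimension $n\geq 3$ of $L$ in $M$ is enough for a general-position / Whitney-trick argument that moves $L$ into small prescribed regions, and incompressibility guarantees that the requisite smooth isotopies of $L$ can be realized Hamiltonianly without $\pi_1$-flux obstructions. Second, incompressibility plays a technical role in the construction of the partial quasi-state $\zeta_L$ itself, since it controls how disks and chords in Lagrangian Floer theory for $L$ interact with the $\pi_1$ of $M$.

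The main obstacle I anticipate is the Floer-theoretic input behind the quasi-state estimate. In the negatively monotone setting, standard Lagrangian Floer theory over $\CC$ is obstructed by negative Chern sphere bubbling, so $\zeta_L$ must be built over a suitably chosen Novikov ring, and one must verify its triangle inequality, Hamiltonian invariance, and the heaviness lower bound. These properties should be established in earlier sections of the paper; once they are in hand, the proof of Theorem~\ref{t:main} reduces to assembling the three estimates above.
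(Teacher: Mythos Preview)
Your proposal has a fatal gap: you plan to produce $N$ pairwise disjoint Hamiltonian images $L_i=\psi_i(L)$ of the heavy Lagrangian $L$, but this is impossible already for $N=2$. A heavy subset is Hamiltonian non-displaceable \cite[Theorem~1.4]{EP09}, so $\psi(L)\cap L\neq\emptyset$ for every $\psi\in\Ham(M,\omega)$; applying this to $\psi=\psi_j^{-1}\psi_i$ shows $L_i\cap L_j\neq\emptyset$ for all $i,j$. No Whitney-trick or general-position argument can help: smooth displaceability in high codimension is irrelevant when Hamiltonian displaceability is obstructed by Floer-theoretic rigidity. Your explanation of why $n\ge 3$ and incompressibility are needed is therefore also off the mark.

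The paper obtains disjoint heavy sets by an entirely different mechanism. Theorem~\ref{t:boundary} (proved via Theorem~\ref{t:boundary proof}) shows that the contact-type hypersurface $\partial U$ is heavy for every Weinstein neighborhood $U$ of $L$; varying the radius yields a one-parameter family of pairwise disjoint heavy hypersurfaces. The hypotheses enter only here, and for analytic reasons: incompressibility forces any Hamiltonian orbit contractible in $M$ to be contractible inside the Weinstein neighborhood, so its Conley--Zehnder index is governed by the Morse index of the underlying closed geodesic (Example~\ref{ex:Reeb}); the bound $n\ge 3$ places these indices in the range required for the action/index filtration argument of Theorem~\ref{t:Livoulle proof}. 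With the family of heavy hypersurfaces in hand, Lemma~\ref{l:unbounded} constructs $I$ directly from radial bump functions and the Hamiltonian spectral invariant $c(1,\cdot)$, using that $V$ descends to $v$ in the negatively monotone case \cite{K,McDuff}. No Lagrangian quasi-state $\zeta_L$ appears anywhere in the argument.
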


Particularly, the map $I$ is a quasi-isometric embedding of $\RR^N$ and its image is called an $N$-dimensional \textit{quasi-flat}; see \cite[Section 1.1]{CHS} for more detailed discussions. It is known to exist under various topological or dynamical conditions \cite{Py,Usher}. A recent breakthrough \cite{CHS,PS} is the construction of infinite-dimensional quasi-flats inside $\Ham(S^2)$. 

Our method of constructing $I$ is to compute spectral invariants of functions supported in neighborhoods of incompressible Lagrangians and use Entov-Polterovich's theory of \textit{heavy} sets \cite{EP09}. 

Lagrangian Floer cohomology detects heavy Lagrangians \cite{Albers,BC,EP09,FOOO2019}. More precisely, a Lagrangian $L$ with non-zero Floer cohomology (possibly with bounding cochains) is heavy; see \cite[Theorem 1.6]{FOOO2019}. It enables us to present concrete applications.

\begin{example}\label{ex:asph}
    If $(M,\omega)$ is closed symplectically aspherical and it contains an incompressible Lagrangian, then $M$ admits quasi-isometric embeddings $I:(\RR^N, \abs{\cdot}_\infty)\to \Ham(M,\omega)$ for any $N$ when $\dim M\geq 6$.
\end{example}
\begin{proof}
    Recall $(M,\omega)$ is called symplectically aspherical if $c_{1}(TM)\mid_{\pi_{2}(M)}=\omega\mid_{\pi_{2}(M)}=0$. Hence it is negatively monotone for any negative constant $\tau$. An incompressible Lagrangian $L$ in $M$ has non-zero Floer cohomology because $\omega\mid_{\pi_2(M,L)}=0$.
\end{proof}

\begin{example}\label{ex:sphere}
    Let $(M,\omega)$ be a degree-$d$ smooth hypersurface in $\CC P^{n+1}$ with the Fubini-Study symplectic structure. If $d> n+2$, then $M$ admits quasi-isometric embeddings $I:(\RR^N, \abs{\cdot}_\infty)\to \Ham(M,\omega)$ for any $N$.
\end{example}
\begin{proof}
    When $d> n+2$, a degree-$d$ smooth hypersurface is a negatively monotone symplectic manifold with respect to the induced Fubini-Study structure. It always contains a Lagrangian sphere by the vanishing cycle construction. A Lagrangian sphere in a negatively monotone manifold has non-zero Floer cohomology; see Section \ref{sec:sphere} for more discussions and examples.
\end{proof}

\begin{example}
     Let $(M,\omega)$ be a closed negatively monotone symplectic manifold. Then the product $(M\times M, -\omega\times\omega)$ admits quasi-isometric embeddings $I:(\RR^N, \abs{\cdot}_\infty)\to \Ham(M,\omega)$ for any $N$.
\end{example}
\begin{proof}
    Note that $(M\times M, -\omega\times\omega)$ contains an incompressible Lagrangian diagonal which has non-zero Floer cohomology by \cite[Theorem 1.9]{FOOO2017} and \cite[Theorem 1.2]{CW}.
\end{proof}

Next we give more details about backgrounds and our proofs, as well as several results in more general settings.

\subsection{Complements of Lagrangian submanifolds}

For a symplectic manifold $(M,\omega)$, we say that it is non-positive if it satisfies the following condition
\begin{equation}\label{eq:condition}
    \forall B\in\pi_2(M), \quad \text{if} \quad c_1(TM)(B)<0 \quad \text{then} \quad \omega(B)> 0.
\end{equation}
Any symplectic Calabi-Yau manifold (meaning $c_1(TM)\mid_{\pi_2(M)}=0$) or any negatively monotone symplectic manifold satisfies this condition. We say $(M,\omega)$ is \textit{rational} if 
$$
\{\omega(B)\mid B\in\pi_2(M)\}\subset \RR
$$
is a discrete subgroup. A negatively monotone manifold is always rational since $c_1$ takes discrete values on $\pi_2(M)$ and the monotonicity constant $\tau$ is strictly negative.

The theory of spectral invariants provides a way to study Hamiltonian groups via Floer theory. Associated with any non-zero class $A$ in the quantum cohomology $QH(M,\omega)$ of $M$ and a time-dependent Hamiltonian function $H$, there is a number $c(A,H)\in\RR$, called the spectral invariant of $A$ and $H$; see Section \ref{sec:si} for the definition. We will mainly focus on the case that $A=1\in QH^0(M,\omega)$ being the unit of the quantum cohomology. The spectral norm of $H$ is defined as
\begin{equation}\label{eq:spectral norm}
    V(H):= -c(1,H)-c(1,\bar{H}),
\end{equation}
where $\bar{H}(t,x):=-H(t,\phi^t_H(x))$ and $\phi^t_H$ is the time-$t$ Hamiltonian flow of $H$. The homotopy invariance property of $c$ gives a well-defined map
$$
V:\widetilde{\Ham}(M,\omega)\to\RR_{\geq 0}, \quad V(\phi^t_H):=V(H)
$$
on the universal cover $\widetilde{\Ham}(M,\omega)$ of $\Ham(M,\omega)$. This function $V$ is only a pseudo-norm. By taking infimum among all deck transformations, one defines the spectral norm as
\begin{equation}
    v:\Ham(M,\omega)\to\RR_{\geq 0}, \quad v(\psi):= \inf_{\psi=\phi^1_H}V(H),
\end{equation}
which is a conjugation invariant norm \cite{Oh,Sc}.

\begin{remark}
    The definition of spectral invariants is sensitive to the ground coefficient of the quantum cohomology. Our proofs work for any coefficient whenever the Hamiltonian Floer theory is defined. Due to our sign conventions, there are two negative signs in (\ref{eq:spectral norm}).
\end{remark}

It is a natural problem to study the diameter of $\Ham(M,\omega)$ equipped with the norm $v$. Entov-Polterovich \cite{EP03} proved the spectral diameter of $\CC P^n$ is finite when the ground ring of $QH(\CC P^n, \omega_{FS})$ is a field. On the other hand, Kawamoto-Shelukhin \cite{KS} proved the spectral diameter of $\CC P^n$ is infinite when the ground ring is $\ZZ$. When $M$ is symplectically aspherical, Mailhot \cite{Mailhot} gave one sufficient condition for the spectral diameter to be infinite. In general the spectral diameter is poorly understood. For example, it is not known whether it is always infinite when $QH(M,\omega)$ is very far from being a field \cite[Remark 1.11]{McDuff}. Recall that the quantum cohomology ring \cite{MS2} is a deformation of the classical cohomology ring, which is never a field for any closed manifold. In order for $QH(M,\omega)$ to be a field, one usually expects $M$ to have lots of holomorphic curves to deform the usual cup product. In this article, we consider negatively monotone manifolds, which are expected to have fewer non-trivial holomorphic curves.

Recall that, by Entov-Polterovich \cite{EP09}, a compact subset $K$ of $M$ is called \textit{heavy} if 
$$
c(1,H)=0, \quad \forall H\geq 0, H\mid_K =0
$$
and it is called \textit{super-heavy} if
$$
\mu(H):=\lim_{m\to+\infty} \dfrac{c(1,mH)}{m}=0, \quad \forall H\leq 0, H\mid_K =0.
$$
It is enough to just use autonomous Hamiltonian functions in this definition; see Section \ref{sec:si} for our conventions on the definitions. These notions reveal surprising rigidity phenomena of compact subsets of a symplectic manifold.\footnote{Generally a (super-)heavy set is defined with respect to an idempotent of the quantum cohomology. In this article we only consider the unit.} They are helpful to us in that if there exist $N+1$ disjoint heavy sets then there is a group homomorphism
    $$
    I:(\RR^N, \abs{\cdot}_\infty)\to C^\infty(M)
    $$
    satisfying
    $$
    \abs{x}_\infty\leq V(I(x))\leq \abs{I(x)}_{\Hofer}\leq  2\abs{x}_\infty;
    $$
see Lemma \ref{l:unbounded}. This reduces the task of constructing quasi-isometric embeddings to finding disjoint heavy sets. Suppose we already have a heavy Lagrangian $L$, to find a second heavy set disjoint from $L$, it is natural to look at the complement of $L$.

For any closed Lagrangian submanifold $L$ of $(M,\omega)$, a neighborhood $U$ of it is called Weinstein if it is symplectomorphic to 
$$
U_{1,g}T^{*}L:= \lbrace (q, p)\in T^{*}L : \abs{p}_{g}<1 \rbrace
$$
for some metric $g$ on $T^*L$ that is equipped with the canonical symplectic form.

\begin{theorem}\label{t:main lag}
	Let $(M^{2n}, \omega)$ be a closed rational symplectic manifold satisfying Condition (\ref{eq:condition}) with $n\geq 3$. Let $L$ be a closed Lagrangian submanifold of $M$. If $L$ is incompressible, then $M-U$ is heavy for any Weinstein neighborhood $U$ of $L$.
\end{theorem}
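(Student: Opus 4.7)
The aim is to prove $c(1, F) = 0$ for every smooth Hamiltonian $F \geq 0$ supported in $U$. The lower bound $c(1, F) \geq 0$ is immediate from $F \geq 0$, monotonicity of $c(1, \cdot)$, and $c(1, 0) = 0$. The content is therefore the upper bound $c(1, F) \leq 0$, which I would establish by exhibiting an explicit Floer cocycle representing $1 \in QH^0(M, \omega)$ at action level not exceeding $0$.

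The construction proceeds via the PSS map. Choose an auxiliary Morse function $g$ on $M$ whose critical set lies entirely in the open complement $M \setminus \overline{U}$, with a unique global minimum at a point $p_0$. Then $p_0$ Morse-represents the unit in quantum cohomology. Feeding $p_0$ into the PSS map targeting $HF^*(F)$ produces a Floer cocycle whose leading classical term is the constant $1$-periodic orbit at $p_0$ equipped with the trivial capping; since $F$ vanishes on a neighborhood of $p_0$ and the trivial capping has zero symplectic area, this orbit has action exactly $0$.

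The core task is then to show that the higher-order contributions to the PSS cocycle cannot push its action level above $0$. These corrections arise from configurations containing non-constant $J$-holomorphic spheres. I would control them using three ingredients: rationality of $(M, \omega)$ gives a discrete spectrum of possible action values, so one only needs to beat every positive element of the spectrum by an $\epsilon$; condition (\ref{eq:condition}) forces every $J$-holomorphic sphere of non-positive symplectic area to have non-negative Chern number, sharply restricting which sphere classes can appear in the relevant moduli spaces; and the incompressibility of $L$ together with $n \geq 3$ (which, via general position, implies $\pi_1(M \setminus L) \cong \pi_1(M)$ and hence lets one match capping conventions inside and outside $U$ consistently) ensures that every $1$-periodic orbit of $F$ contained in $U$ admits a capping disk inside $U$ itself, giving a uniform bound on the symplectic area of such cappings in terms of the size of the Weinstein neighborhood.

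The principal obstacle is ruling out bad bubbling in the one-parameter PSS moduli spaces when sphere classes of negative Chern number are present. Here a virtual-dimension count using $n \geq 3$ shows that the generic moduli of low-dimensional $J$-holomorphic spheres of negative Chern number are empty, so that the Gromov compactification of the PSS moduli spaces picks up only controllable sphere components. Combining this with the discrete action spectrum and a standard perturbation argument, letting both the Morse function and the Weinstein neighborhood be refined, yields for any $\epsilon > 0$ a Floer cocycle representing $1$ at action level $\leq \epsilon$; lower semicontinuity of $c(1, \cdot)$ then gives $c(1, F) \leq 0$.
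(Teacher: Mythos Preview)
Your approach has a genuine gap, and it diverges substantially from how the paper actually proceeds.

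The central problem is your proposed mechanism for controlling the PSS cocycle. You assert that ``a virtual-dimension count using $n \geq 3$ shows that the generic moduli of low-dimensional $J$-holomorphic spheres of negative Chern number are empty.'' This is the semi-positivity condition, but manifolds satisfying Condition~(\ref{eq:condition}) are typically \emph{not} semi-positive: for a negatively monotone $(M,\omega)$ with $c_1 = \tau\omega$, $\tau<0$, every $B\in\pi_2(M)$ with $\omega(B)>0$ has $c_1(B)<0$, and there is no reason these avoid the window $3-n \le c_1(B) < 0$. So your bubbling argument does not go through, and with it collapses your control of the ``higher-order contributions'' to the PSS image. Without that, you have no way to bound the action of the PSS cocycle, and the inequality $c(1,F)\le 0$ remains unproven. (There is also a convention issue: in this paper $c(1,F)$ is a \emph{supremum} of min-actions over representatives, so exhibiting one cocycle at low action does not by itself give an upper bound.)

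The paper's argument avoids moduli-space analysis entirely. It works with radial Hamiltonians on the Weinstein neighborhood and analyzes only the \emph{actions and indices} of capped one-periodic orbits. The role of $n\ge 3$ is completely different from what you propose: it guarantees (via Example~\ref{ex:Reeb}) that Reeb orbits on the unit cosphere bundle which are contractible in $U$ are contractible in $\partial U$ and hence have Conley--Zehnder index equal to a geodesic Morse index, which is non-negative. Combined with incompressibility (giving ``inner cappings'' in $U$, which you correctly identified), this forces every degree-$0$ capped orbit lying in $U$ to have action above a fixed threshold. The proof then compares a base Hamiltonian $F$ with a family $G_k$ obtained by raising the level on $K$: the degree-$0$ action spectra agree below the threshold, and a continuity-plus-spectrality argument (interpolating through a monotone sequence with small steps) shows $c(1,G_k^*) = c(1,F^*)$ independently of $k$. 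This uniform bound yields $\mu(H)=0$ for all admissible $H$, hence heaviness. No PSS moduli, no bubbling control, no transversality beyond standard Morse--Bott perturbations is needed.
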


In order to get more disjoint heavy sets, we consider the boundaries of the Weinstein neighborhoods. Note that by varying the size of $U$, there is a family of disjoint contact hypersurfaces, parameterized by an open interval.  A technical enhancement of Theorem \ref{t:main lag} is the following. 

\begin{theorem}\label{t:boundary}
    Let $(M^{2n}, \omega)$ be a closed rational symplectic manifold satisfying Condition (\ref{eq:condition}) with $n\geq 3$. Let $L$ be a closed Lagrangian submanifold of $M$. If $L$ is incompressible and heavy, then $\partial(M-U)$ is heavy for any Weinstein neighborhood $U$ of $L$. 
\end{theorem}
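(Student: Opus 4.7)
The plan is to deduce heaviness of the contact boundary $\partial U$ by combining the heaviness of $M - U$ supplied by Theorem~\ref{t:main lag} with the hypothesized heaviness of $L$ itself. Geometrically, $\partial U$ separates $M$ into two heavy regions ($\overline{U}$, which contains the heavy Lagrangian $L$, and $M - U$), and any non-negative Hamiltonian vanishing on $\partial U$ can be split into a piece supported on each side, each individually controlled by one of these heaviness statements.

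Concretely, I would take an arbitrary smooth $H \geq 0$ with $H|_{\partial U} = 0$ and first reduce to the case where $H$ vanishes on an open neighborhood of $\partial U$: replace $H$ by a mollification of $\max(H - \delta, 0)$ for small $\delta > 0$, which differs from $H$ by at most $\delta$ in $C^0$ and hence in the Hofer norm. For such an $H$ one has a clean smooth splitting
$$
H = H_1 + H_2, \qquad H_1, H_2 \geq 0,
$$
with $\mathrm{supp}(H_1) \subset U$ and $\mathrm{supp}(H_2) \subset M - \overline{U}$. Then $H_1$ vanishes on $M - U$, so Theorem~\ref{t:main lag} yields $c(1, H_1) = 0$; and $H_2$ vanishes on $L$ since $L \subset U$, so the heaviness of $L$ yields $c(1, H_2) = 0$. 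Because $H_1$ and $H_2$ have disjoint supports, their Hamiltonian flows commute, so the triangle inequality for spectral invariants gives
$$
0 \leq c(1, H) = c(1, H_1 \# H_2) \leq c(1, H_1) + c(1, H_2) = 0,
$$
where the first inequality uses the monotonicity $c(1, H) \geq c(1, 0) = 0$ valid for $H \geq 0$. Undoing the approximation via the Hofer-Lipschitz property of $c(1, \cdot)$ then gives $c(1, H) = 0$ for the original $H$, which is the defining property of heaviness for $\partial U$.

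The argument is largely formal once the spectral-invariant machinery is in place. The main technical point is ensuring that the subadditivity $c(1, H_1 \# H_2) \leq c(1, H_1) + c(1, H_2)$ and the Hofer-continuity of $c(1, \cdot)$ are available in the negatively monotone setting; both are standard consequences of the pair-of-pants product on Hamiltonian Floer cohomology and should already be set up in the paper for the proof of Theorem~\ref{t:main lag}. The geometric content is entirely absorbed into that earlier theorem: once one knows $M - U$ is heavy for every Weinstein neighborhood $U$ of the heavy Lagrangian $L$, the boundary $\partial U$ is sandwiched between two heavy regions and inherits heaviness by a formal decomposition argument.
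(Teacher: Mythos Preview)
Your decomposition $H=H_1+H_2$ and the use of heaviness on each side are fine, but the crucial inequality is stated in the wrong direction. In the paper's cohomological conventions (Section~\ref{sec:si}, item~(4)) the triangle inequality reads
\[
c(1,H_1\# H_2)\;\geq\;c(1,H_1)+c(1,H_2),
\]
not $\leq$. So from $c(1,H_1)=c(1,H_2)=0$ you only recover $c(1,H)\geq 0$, which was already known from monotonicity. Passing to the dual homological convention does not help: the duality $c^{EP}([M],H)=-c(1,-H)$ turns the subadditive $c^{EP}$-inequality back into the same useless direction for $c(1,\cdot)$. What you would actually need is a ``max inequality'' $c(1,H_1+H_2)\leq\max\bigl(c(1,H_1),c(1,H_2)\bigr)$ for disjointly supported Hamiltonians. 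This is known in the symplectically aspherical setting but is \emph{not} available in the generality of Condition~(\ref{eq:condition}); the paper itself flags this as an open question in the remark following Theorem~\ref{t:Livoulle proof}, citing \cite{GT,T}. So the step ``the triangle inequality gives $c(1,H)\leq 0$'' is a genuine gap, not a convention issue.

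For comparison, the paper's proof (Theorem~\ref{t:boundary proof}) does not attempt any such formal splitting. It starts from a radial function $f$ that is small on a slightly enlarged neighbourhood $U_\epsilon$; the heaviness of $L$ (hence of $U_\epsilon$) is used only to pin down $c(1,f)\approx 0$. Then, exactly as in Theorem~\ref{t:Livoulle proof}, one raises $f$ inside $U_\epsilon$ to a function $g$ and shows via Conley--Zehnder index bounds (from the incompressibility and $n\geq 3$), the rationality of $\omega$, and Condition~(\ref{eq:condition}) that no new degree-zero capped orbits appear in the relevant action window. A Lipschitz-plus-spectrality interpolation then gives $c(1,g)=c(1,f)$ regardless of how large $g$ is made on $U_\epsilon$, and this bounds $c(1,mH)$ uniformly. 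The geometric index/action analysis is essential precisely because the formal product inequality is unavailable.
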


\begin{remark}
	\begin{enumerate}
 
		\item Theorem \ref{t:main lag} implies that $L$ is not super-heavy in the above setting, since a super-heavy set intersects any heavy set. On the other hand, many examples of super-heavy Lagrangians are known; see Subsection 1.6 in \cite{EP09}.

        \item Theorem \ref{t:boundary} is a natural analogue of two facts: an incompressible annulus in a torus is foliated by a family of heavy circles; and the rigidity of a Liouville domain implies the rigidity of its boundary \cite{CFO,Ritter}.

	\end{enumerate}
\end{remark}

McDuff (Theorem 1.3 in \cite{McDuff}) and Kawamoto (Theorem 6 in \cite{K}) give lots of conditions that the pseudo-norm $V$ descends to the norm $v$. Particularly, it is true when $(M,\omega)$ is negatively monotone. Therefore Theorem \ref{t:main} follows from Theorem \ref{t:boundary} and Lemma \ref{l:unbounded}. Moreover, in the symplectic Calabi-Yau setting we have  

\begin{corollary}\label{co:diam}
	If a closed rational symplectic Calabi-Yau manifold $M^{2n}$ with $n\geq 3$ contains an incompressible heavy Lagrangian, suppose further that
	\begin{enumerate}
		\item the even degree homology of $M$ is generated as a ring by $H_{2n-2}(M)$, or;
		\item the quantum product on $QH^{*}(M)$ is the usual cup product on $H^{*}(M)$;
	\end{enumerate}  
    then for any positive integer $N$ there is a group homomorphism
    $$
    I:(\RR^N, \abs{\cdot}_\infty)\to \Ham(M,\omega)
    $$
    satisfying
    $$
    \abs{x}_\infty\leq v(I(x))\leq \abs{I(x)}_{\Hofer}\leq  2\abs{x}_\infty.
    $$
\end{corollary}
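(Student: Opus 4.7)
My plan is to adapt the proof of Theorem~\ref{t:main} and only diverge at the last step, where the descent of the pseudo-norm $V$ from $\widetilde{\Ham}(M,\omega)$ to the norm $v$ on $\Ham(M,\omega)$ has to be established ``by hand'' using hypothesis (1) or (2). The first two steps, namely producing many pairwise disjoint heavy sets and then applying Lemma~\ref{l:unbounded}, are identical to the negatively monotone case treated in Theorem~\ref{t:main}.

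Concretely, I would fix a Weinstein neighborhood $U$ of the incompressible heavy Lagrangian $L$, identify it with $U_{1,g}T^*L$, and choose $N+1$ nested sub-neighborhoods of increasing radii $0<r_0<r_1<\cdots<r_N<1$. Their boundaries $\Sigma_i:=\partial(M-U_{r_i})$ are smooth, pairwise disjoint contact hypersurfaces, and each is heavy by Theorem~\ref{t:boundary}. Lemma~\ref{l:unbounded} then produces a group homomorphism
\[
I\colon(\RR^N,\abs{\cdot}_\infty)\longrightarrow C^\infty(M),\qquad \abs{x}_\infty\leq V(I(x))\leq \abs{I(x)}_{\Hofer}\leq 2\abs{x}_\infty,
\]
and composition with the time-one flow map $H\mapsto \phi^1_H$ yields the candidate map $\RR^N\to \Ham(M,\omega)$. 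The upper bound by the Hofer norm descends under this projection since $\abs{\cdot}_{\Hofer}$ is already defined on $\Ham$; the nontrivial step is to upgrade the lower bound to $\abs{x}_\infty\leq v(I(x))$.

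This upgrade is precisely the content of McDuff's Theorem~1.3 in \cite{McDuff} and Kawamoto's Theorem~6 in \cite{K}. Under hypothesis (2) the quantum and ordinary cup products coincide, so no quantum corrections enter the Seidel representation acting on the unit and the descent is essentially tautological. Under hypothesis (1), the even-degree ring is generated by divisor classes coming from $H_{2n-2}(M)$, which is one of the explicit regimes in which the authors cited above show that the valuation of the Seidel element on $1$ vanishes, forcing $V(H) = v(\phi^1_H)$ for the Hamiltonians that concern us. In either case, combining with the $V$-estimate from the previous paragraph completes the chain of inequalities with $v$ in place of $V$ on the image of $I$.

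The main obstacle, as in the negatively monotone case, is precisely this descent step: the infimum defining $v(\phi^1_{I(x)})$ might in principle be realised by a lift to $\widetilde{\Ham}$ very different from the one coming from the flow of $I(x)$, and it is only the structural hypotheses (1) or (2) on $QH^*(M)$, combined with the Calabi-Yau assumption which trivialises the degree shift of the Seidel representation, that rule this out. Once the correct clause of \cite{McDuff} or \cite{K} has been identified and matched to our setting, the first two steps reduce to assembling Theorem~\ref{t:boundary} and Lemma~\ref{l:unbounded} as in Theorem~\ref{t:main}.
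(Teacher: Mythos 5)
Your proposal is correct and follows the same route as the paper: the paper derives the corollary exactly by combining Theorem~\ref{t:boundary} (giving the family of disjoint heavy hypersurfaces $\partial(M-U)$ for varying Weinstein neighborhoods) with Lemma~\ref{l:unbounded}, and observing that hypotheses (1) and (2) are among McDuff's conditions (Theorem~1.3 in \cite{McDuff}; see also \cite{K}) guaranteeing that $V$ descends to $v$ in the symplectic Calabi--Yau setting. The only cosmetic point is that Lemma~\ref{l:unbounded} as stated asks for the whole interval's worth of heavy slices $Z_b$, not merely $N+1$ of them, but Theorem~\ref{t:boundary} supplies exactly that.
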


Note that any K\"ahler 6-manifold $M$ satisfies that the even degree homology is generated as a ring by $H_{4}(M)$. Hence sample examples of Corollary \ref{co:diam} are projective Calabi-Yau 6-manifolds which, moreover, contain Lagrangian spheres. Examples satisfying $(2)$ in Corollary \ref{co:diam} include symplectically aspherical manifolds. In this setting see related results \cite{Mailhot,MVZ}. Beyond the symplectically aspherical case, hyper-K\"ahler manifolds also have undeformed quantum products; see Section \ref{sec:ud} for more discussions.

\begin{remark}
    \begin{enumerate}
        \item We assume that $(M,\omega)$ is closed but we expect our results can be extended to symplectic manifolds which are convex at infinity. Particularly, cotangent bundles should fit in Example \ref{ex:asph}.
        \item Besides Lagrangian Floer cohomology, the symplectic cohomology with support is also closely related to heavy sets \cite{MSV}. It is not used in this article and we hope to explore its relation with the spectral norm in the future.
    \end{enumerate}
\end{remark}

\subsection{Super-heavy Lagrangian skeleton}
Our method of computing spectral invariants is also applicable to radius-type functions on symplectic disk bundles. We present one application here about the symplectic rigidity of certain Lagrangian skeleta. A symplectic manifold $(M,\omega)$ is called \textit{integral} if the symplectic class $[\omega]$ admits a lift to $H^2(M;\ZZ)$. Given a closed integral symplectic manifold $(M,\omega)$ and a Donaldson hypersurface $Y$ in $M$, there is a Biran-Giroux decomposition 
$$
(M, \omega)\cong (U, \omega_U)\sqcup L_Y
$$
where $U$ is a unit disk bundle over $Y$ with a standard symplectic form $\omega_U$ and $L_Y$ is an isotropic CW complex, called the skeleton \cite{Biran,Gir}. We remind the reader that the construction of the pair $(U, L_Y)$ depends on the choices of metrics and connections. In particular, $L_Y$ is not determined merely by $Y$ and the following result works for $L_Y$ with respect to any such choices; see Remark \ref{re:comp}.

\begin{theorem}\label{t:skeleton}
    Let $(M^{2n},\omega)$ be a closed integral symplectic manifold satisfying Condition (\ref{eq:condition}). Let $Y^{2n-2}$ be a closed symplectic hypersurface in $M$ whose Poincar\'e dual is $k[\omega]$ for some positive integer $k$. The skeleton $L_Y$ is super-heavy. 
\end{theorem}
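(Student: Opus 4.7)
The plan is to apply the Biran--Giroux decomposition $(M, \omega) \cong (U, \omega_U) \sqcup L_Y$, where $U$ is a standard symplectic disk bundle of radius one over $Y$ with radial coordinate $r \colon U \to [0,1)$, and then to compute spectral invariants of radial Hamiltonians on $U$. To prove super-heaviness I must show $\mu(H) = \lim_{m \to \infty} c(1, mH)/m = 0$ for every autonomous $H \leq 0$ with $H|_{L_Y} = 0$. The upper bound $\mu(H) \leq 0$ is immediate, since $mH \leq 0$ together with the monotonicity of spectral invariants gives $c(1, mH) \leq c(1,0) = 0$. For the reverse inequality, any such $H$ is supported in $U$ and tends to zero as $r \to 1$. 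Setting $h(r) := \min_{\{r(x) = r\}} H(x)$, I choose a smooth profile $\rho \colon [0,1] \to (-\infty, 0]$ with $\rho \leq h$ on $[0,1]$, $\rho(1) = 0$, and $\rho$ vanishing to infinite order at $r = 1$; the function $F := \rho \circ r$ then extends smoothly to $M$ by zero on $L_Y$ and satisfies $F \leq H$. By monotonicity $\mu(F) \leq \mu(H)$, so the problem reduces to proving $\mu(F) \geq 0$ for such radial Hamiltonians.

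The next step is a Floer-theoretic computation of $c(1, mF)$ using the ``radius-type function on symplectic disk bundles'' method indicated in the discussion preceding Theorem \ref{t:skeleton}. The Hamiltonian vector field $X_F = \rho'(r) X_r$ is tangent to the fibers of $U \to Y$ and vanishes identically on $L_Y$, so every point of $L_Y$ is a (degenerate) constant $1$-periodic orbit of $mF$ of action $0$. After a small non-degenerate perturbation, the remaining $1$-periodic orbits of $mF$ are constant orbits at critical points of $\rho$ (of action $-m\rho(r) \geq 0$) and fiberwise Reeb-type orbits at radii where $m\rho'(r)/2\pi$ is a positive integer, with actions of the form $m(r\rho'(r) - \rho(r))$. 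Integrality, hence rationality, of $(M, \omega)$ guarantees that the full action spectrum is a discrete subset of $\RR$. I then produce a Floer cycle representing $1 \in QH^{*}(M,\omega)$ supported near $L_Y$, of action $\geq -\epsilon_m$ with $\epsilon_m = o(m)$; dividing by $m$ and taking $m \to \infty$ yields $\mu(F) \geq 0$, which combined with $\mu(F) \leq \mu(H) \leq 0$ gives $\mu(H) = 0$.

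The hard part is producing this $L_Y$-supported Floer cycle representing the unit, and showing that Condition (\ref{eq:condition}) is the right hypothesis to keep the construction rigid. Unlike the setting of Theorem \ref{t:main lag}, where the incompressibility of $L$ lets one lift Floer trajectories to a universal cover of the complement, here $L_Y$ is a singular isotropic CW complex of codimension at least two rather than a smooth submanifold, so the lifting technique is not directly available. Instead one must exploit the explicit fiberwise geometry of the Biran--Giroux disk bundle $U$, control the moduli spaces of Floer cylinders and PSS half-cylinders on $U$ via Condition (\ref{eq:condition}) to exclude bubbling of spheres with $c_1 \leq 0$ of non-positive $\omega$-area, and appeal to rationality to ensure that no cumulative small action shifts disrupt the $L_Y$-supported representative of the unit in the limit $m \to \infty$.
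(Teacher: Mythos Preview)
Your reduction to radial Hamiltonians on the Biran--Giroux disk bundle is correct and agrees with the paper, but the core of your argument---the claimed construction of an $L_Y$-supported Floer cocycle representing the unit with action $\geq -\epsilon_m$---is not carried out. You yourself flag this as ``the hard part'' and offer only a vague indication of how Condition~(\ref{eq:condition}) might enter via bubbling control on PSS moduli spaces; that paragraph is an aspiration, not a proof. There is no mechanism given for why the unit should be represented near $L_Y$ at action $o(m)$, and the sentence about ``lifting to a universal cover of the complement'' being unavailable suggests you are looking for an analogue of the Liouville argument that the paper does not in fact use here.

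The paper's method avoids constructing such a cycle altogether. It fixes a radial Hamiltonian $F$ that is linear in $R=\rho^2$ near $Y$ with a moderate slope $\lambda\in(10n,10n+1)$, so that $c(1,F^*_t)$ is some finite number, and then raises the slope in small discrete steps $F\to G\to\cdots$. The crucial point is an index--action computation for the \emph{new} orbits created at each step: they have multiplicity $w\geq 10n+1$, hence with the fiber-disk capping $u_Y$ one gets $CZ([\gamma,u_Y])\leq -19n-2$ by Lemma~\ref{l:index} and action below $\min F^*_t$ by Lemma~\ref{l:action}. To recap such an orbit to degree zero one must glue $B\in\pi_2(M)$ with $c_1(B)>0$; applying Condition~(\ref{eq:condition}) to $-B$ forces $\omega(B)\leq -\epsilon_M<0$, so the action only drops further. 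Thus $Spec_0(G^*_t)$ and $Spec_0(F^*_t)$ agree in the window $(\min F^*_t-\epsilon',+\infty)$, and the Lipschitz/spectrality interpolation of Theorem~\ref{t:Livoulle proof} yields $c(1,G^*_t)=c(1,F^*_t)$. Iterating, the spectral invariant stays constant as the slope tends to infinity, so $c(1,mH)$ is uniformly bounded for every $H\leq 0$ vanishing on $W_b$, and $\mu(H)=0$. Condition~(\ref{eq:condition}) enters purely through this recapping inequality, not through any compactness analysis of PSS half-cylinders.
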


Various rigidity phenomenon of the skeleta was discovered since its introduction in \cite{Biran}. Recently it is discussed by \cite{BSV,TVar,MSV} in positively monotone and symplectic Calabi-Yau case. Here our Condition (\ref{eq:condition}) provides some complementary cases. A standard corollary is

\begin{corollary}
    Let $(M^{2n},\omega)$ be a closed integral symplectic manifold satisfying Condition (\ref{eq:condition}). Let $Y^{2n-2}$ be a closed symplectic hypersurface in $M$ whose Poincar\'e dual is $k[\omega]$ for some positive integer $k$. There hold
    \begin{enumerate}
        \item $L_Y$ is not displaceable from itself by any symplectomorphism. Particularly, it has an $n$-dimensional cell.
        \item Any Lagrangian in $M$ with non-zero Floer cohomology is not displaceable from $L_Y$ by any Hamiltonian diffeomorphism.
    \end{enumerate}
\end{corollary}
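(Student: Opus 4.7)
Both assertions are formal consequences of Theorem \ref{t:skeleton}, once I invoke two standard facts from \cite{EP09}: a super-heavy set meets every heavy set, and a super-heavy subset is in particular heavy. The plan is to reduce each claim to one of these intersection statements.

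For part (2), I would use that by \cite[Theorem 1.6]{FOOO2019} (the result quoted in the excerpt just before Example \ref{ex:asph}), any Lagrangian $L\subset M$ with non-zero Floer cohomology is heavy. Given $\phi\in\Ham(M,\omega)$, the image $\phi(L)$ is Hamiltonian isotopic to $L$, its Floer cohomology is unchanged, and hence $\phi(L)$ is again heavy. Since $L_Y$ is super-heavy by Theorem \ref{t:skeleton}, the super-heavy/heavy intersection property yields $\phi(L)\cap L_Y\neq\emptyset$, which is precisely non-displaceability of $L$ from $L_Y$ by Hamiltonian diffeomorphisms.

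For part (1), I would first observe that super-heaviness with respect to the unit idempotent is preserved under arbitrary symplectomorphisms. Indeed, for any symplectomorphism $\phi$ of $M$, the pullback $\phi^{*}$ fixes $1\in QH(M,\omega)$, and the standard naturality of spectral invariants gives $c(1,H\circ\phi^{-1})=c(1,H)$; transporting the defining inequality of super-heaviness through $H\mapsto H\circ\phi^{-1}$ shows that $\phi(L_Y)$ is super-heavy. Since super-heavy sets are heavy, the same intersection property gives $L_Y\cap\phi(L_Y)\neq\emptyset$, establishing the first sentence of (1). For the existence of an $n$-dimensional cell, I argue by contradiction: if every cell of $L_Y$ had dimension at most $n-1$, then $L_Y$ would be a compact isotropic CW complex of dimension strictly less than $n$, and such a set is stably displaceable by a Hamiltonian isotopy (a stratified extension of the Laudenbach--Sikorav argument, see \cite{EP09}). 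Stable displaceability contradicts heaviness, hence super-heaviness, forcing at least one cell to be top-dimensional (i.e.\ Lagrangian).

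The one mildly delicate input is the invariance of super-heaviness under general symplectomorphisms used in (1); this is automatic for the unit idempotent but would require care for a general idempotent. The remaining steps are direct applications of the intersection property of super-heavy and heavy sets together with the Hamiltonian invariance of Lagrangian Floer cohomology, so no additional obstacles are expected.
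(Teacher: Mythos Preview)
Your proposal is correct and follows the same route as the paper: both parts are reduced to the intersection property of super-heavy and heavy sets from \cite{EP09}, together with Theorem \ref{t:skeleton}. The only noticeable difference is in the argument for the existence of an $n$-dimensional cell. The paper invokes \cite[Lemma 3.2]{BiCi} directly: an isotropic CW complex with no top-dimensional cell is Hamiltonian displaceable, which immediately contradicts the non-displaceability just established in the first sentence of (1). Your version instead appeals to \emph{stable} displaceability and the fact that heavy sets are stably non-displaceable; this is logically fine but slightly more roundabout, and your attribution of the displaceability statement to \cite{EP09} via a ``stratified Laudenbach--Sikorav argument'' is imprecise---the clean reference is the Biran--Cornea lemma the paper cites. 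Otherwise the arguments coincide.
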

\begin{proof}
    This follows from the properties of heavy and super-heavy sets \cite[Theorem 1.4]{EP09}. Note that if $L_Y$ does not have any $n$-dimensional cell then it is Hamiltonian displaceable by \cite[Lemma 3.2]{BiCi}.
\end{proof}

\section{Floer theory backgrounds}
In this section we briefly review the Hamiltonian Floer theory and spectral invariants. For symplectic Calabi-Yau manifolds, classical analysis \cite{Floer,HS} of moduli spaces is enough to define Floer cohomology. If the symplectic class belongs to $H^2(M;\QQ)$ then classical transversality argument also works as developed in \cite{CM,CW}. For general situations, we need to use virtual techniques, like in \cite{FOOO2019, Pardon, AMS}. Our proofs use analysis of actions and indices of Hamiltonian orbits, which are independent from the problem of regularity of moduli spaces. Hence, we generally omit those discussions here. We refer to \cite{V,Sc,Oh,FOOO2019} for the full treatment of spectral invariants in Hamiltonian Floer theory.

\subsection{Hamiltonian Floer theory}
We fix a closed symplectic manifold $(M, \omega)$. Let $H_{t}: \RR/\ZZ\times M\to\RR$ be a time-dependent non-degenerate Hamiltonian function on $M$ and let $\mathcal{P}_{H_{t}}$ be the set of contractible one-periodic Hamiltonian orbits of $H_{t}$. Our convention on the Hamiltonian vector field is $\omega(X_H,\cdot)= dH$. For an orbit $\gamma\in \mathcal{P}_{H_{t}}$, a capping is a smooth map $u$ from a disk to $M$ such that it equals $\gamma$ when restricted to the boundary on the disk. For two cappings $u, w$ of the same orbit $\gamma$, we say they are equivalent if
$$
\int u^*\omega= \int w^*\omega, \quad \text{and} \quad \int u^*c_1(TM)= \int w^*c_1(TM).
$$
Then we write $\widetilde{\mathcal{P}_{H_{t}}}$ as the set of equivalence classes of capped orbits. A general element is written as $[\gamma,u]$. Its action is defined by
\begin{equation}
	\mathcal{A}_{H_{t}}([\gamma, u])= \int_{\gamma}H_{t} +\int u^{*}\omega,
\end{equation}
and its degree is defined by
\begin{equation}
	\deg([\gamma, u])= n+ CZ([\gamma, u])
\end{equation}
where $CZ([\gamma, u])$ is the Conley-Zehnder index of this capped orbit and $n$ is half of the real dimension of $M$. We refer to \cite[Section 2.4]{Salamon} for the definition of the Conley-Zehnder index. If $H$ is a $C^2$-small Morse function, a critical point $p$ can be viewed as a Hamiltonian orbit with a constant capping $[p,c]$. Our convention is that $\deg([p,c])$ is the Morse index of $p$.

Given any class $B\in\pi_2(M)$ and a capped orbit $[\gamma,u]$, we can form a new capped orbit $[\gamma,u+B]$ where $u+B$ is given by gluing a sphere representing $B$ to $u$. The index and action change as follows.
\begin{equation}\label{eq:capchange}
    \deg([\gamma, u+B])=\deg([\gamma, u])+ 2c_1(TM)(B), \quad \mathcal{A}_{H_{t}}([\gamma, u+B])= \mathcal{A}_{H_{t}}([\gamma, u])+ \omega(B).
\end{equation}

Then we consider the $\QQ$-vector space generated by formal sums of degree-$k$ capped orbits, and define
$$
CF^{k}(H_{t}):= \lbrace x=\sum_{i=1}^{+\infty} a_{i}[\gamma_{i}, u_{i}]\mid a_{i}\in\QQ, \deg([\gamma_{i}, u_{i}])=k, \lim_{i\rightarrow +\infty} \mathcal{A}_{H_{t}}([\gamma_{i}, u_{i}])=+\infty\rbrace.
$$
The action of a linear combination of capped orbits is defined as
$$
\mathcal{A}_{H_t}(x):= \min\{\mathcal{A}_{H_t}([\gamma_i, u_i])\mid a_i\neq 0\},
$$
which we call the $\min$-action. We define the degree-$k$ action spectrum as follows.
\begin{equation}
    Spec_k(H_t):= \{\mathcal{A}_{H_t}(x)\mid x\in CF^k(H_t)\}.
\end{equation}
If $H_t$ is non-degenerate then $Spec_k(H_t)$ is a countable subset of $\RR$. Moreover, if $(M,\omega)$ is rational then $Spec_k(H_t)$ is a discrete subset of $\RR$.

There is a differential, defined by counting Floer cylinders,
$$
d: CF^{k}(H_{t})\rightarrow CF^{k+1}(H_{t})
$$
such that $d^{2}=0$, which gives the Hamiltonian Floer cohomology $HF^{*}(H_t)$ of $M$. We remark that in our convention the differential \textit{increases} the degree and the action. A Floer cylinder sends the orbit at negative infinity to the orbit at positive infinity. Hence we have a filtration on the Floer complex
\begin{equation}
	CF^{k}_{\geq p}(H_{t}):= \lbrace x\in CF^{k}(H_{t})\mid \mathcal{A}(x)\geq p\rbrace
\end{equation}
for any real number $p$. 

The ground coefficient ring $\QQ$ is used for defining the differential on general symplectic manifolds. One can use integers for symplectic Calabi-Yau manifolds.

\subsection{Spectral invariants}\label{sec:si}

Now we review the definitions of spectral invariants and heavy sets. For a symplectic manifold $(M,\omega)$, its quantum cohomology is
$$
QH^*(M,\omega; \Lambda_\omega):= H^*(M;\QQ)\otimes_\QQ\Lambda_\omega,
$$
where $\Lambda_\omega$ is a suitable Novikov ring. We refer to \cite{MS2} for more details. In the remainder of this article, we simply write $QH^*(M,\omega; \Lambda_\omega)$ as $QH^*(M)$.

Consider the PSS isomorphism \cite{PSS,FOOO2019}
$$
PSS_H: QH^*(M)\to HF^*(H).
$$
We have the \textit{cohomological spectral invariants} for any non-zero degree-$k$ class $A\in QH^*(M)$
$$
c(A, H):=\sup \lbrace \mathcal{A}_H(x)\in \mathbb{R}\mid x\in CF^k(H), dx=0, [x]=PSS_H(A)\rbrace.
$$
The spectral invariants enjoy several good properties; see \cite{FOOO2019}. 

\begin{theorem}
	Let $A$ be a non-zero class of pure degree in $QH^*(M)$, and let $H$ be a non-degenerate Hamiltonian function. The number $c(A, H)$ has the following properties.
	\begin{enumerate}
	\item (Finiteness) $c(A, H)$ is a finite number in $Spec_k H$ where $k=\deg A$.
	\item (Hamiltonian shift) $c(A, H+\lambda(t))= c(A, H)+ \int_{S^{1}}\lambda(t)$ for any function $\lambda(t)$ on $S^{1}$.
	\item (Lipschitz property) $\int_{S^{1}}\min_{M} (H_{1}- H_{2})\leq c(A, H_{1})- c(A, H_{2})\leq \int_{S^{1}} \max_{M} (H_{1}- H_{2})$. In particular, if $H_{1}\geq H_{2}$ then $c(A, H_{1})\geq c(A, H_{2})$.
	\item (Triangle inequality) $c(A_{1}*A_{2}; H_{1}\# H_{2})\geq c(A_{1}, H_{1})+ c(A_{2}, H_{2})$ where $*$ is the quantum product.
	\item (Homotopy invariance) $c(A, H_{1})= c(A, H_{2})$ for any two normalized Hamiltonian functions generating the same element in $\widetilde{\Ham}(M)$.
	\end{enumerate}
\end{theorem}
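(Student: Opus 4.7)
The plan is to prove each of the five properties separately, leveraging the chain-level picture of Floer theory and the standard features of the action functional $\mathcal{A}_H$. The common setup is that $PSS_H(A)$ is represented by some cycle $x \in CF^k(H)$ of finite $\min$-action, and that the Floer differential strictly increases the action, so the subcomplexes $CF^k_{\geq p}(H)$ give a well-defined filtration of $HF^*(H)$.

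For Finiteness, I would observe that the supremum is taken over a non-empty set since $A\neq 0$ and the PSS map is an isomorphism, and that it is bounded above because any cocycle representing a given non-zero class cannot have arbitrarily high $\min$-action without being a coboundary. To see $c(A,H)\in Spec_k(H)$, one argues by contradiction: if $c(A,H)$ lies strictly between two consecutive action values of capped orbits, then a chosen representative can be modified by a coboundary to strictly raise its action, contradicting the supremum. In the rational case one uses discreteness of $Spec_k(H)$; in general one uses the fact that the set of action values of cocycles in a fixed class is closed. For Hamiltonian Shift, a direct computation gives $\mathcal{A}_{H+\lambda(t)}([\gamma,u])=\mathcal{A}_H([\gamma,u])+\int_{S^1}\lambda(t)\,dt$, because the one-periodic orbits and cappings do not depend on a time-dependent constant shift; the supremum translates by the same amount.

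For the Lipschitz property, I would use the linear monotone homotopy $H_s = (1-s)H_2 + sH_1$ and the induced continuation chain map $\Phi : CF^*(H_2)\to CF^*(H_1)$, which descends to PSS-compatible isomorphisms on cohomology. The standard Floer continuation energy estimate yields
\[
\mathcal{A}_{H_1}(\Phi(x)) \geq \mathcal{A}_{H_2}(x) - \int_{S^1}\max_M (H_2-H_1)\,dt
\]
for every cycle $x$, and taking suprema over representatives of $PSS_{H_2}(A)$ gives one inequality; the other follows by swapping the roles of $H_1$ and $H_2$. For the Triangle Inequality, I would use that the PSS isomorphism intertwines the quantum product on $QH^*(M)$ with the pair-of-pants product on Floer cohomology, and that a standard energy estimate on pair-of-pants moduli spaces gives $\mathcal{A}_{H_1\#H_2}(x_1\star x_2)\geq \mathcal{A}_{H_1}(x_1)+\mathcal{A}_{H_2}(x_2)$ at the chain level. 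Finally, Homotopy Invariance reduces to showing that continuation along a homotopy through normalized Hamiltonians generating the same element of $\widetilde{\Ham}(M,\omega)$ preserves the action filtration exactly, since the relevant cylinders have zero topological energy; the Hamiltonian Shift absorbs any normalization constant.

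The main obstacle in a fully honest execution is not the formal action bookkeeping above but the analytic foundations on which it rests: the existence and compatibility of the PSS map, the continuation maps, and the pair-of-pants product all require transversality of the relevant moduli spaces, which on a general negatively monotone or non-positive symplectic manifold is delicate and needs virtual perturbation technology as indicated at the start of Section 2. Accordingly, my plan for writing the proof is to formulate the chain-level action estimates precisely and then to invoke the construction of these operations from \cite{FOOO2019,Pardon,AMS}.
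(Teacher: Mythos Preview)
The paper does not actually prove this theorem: it is stated as a collection of known properties with the sentence ``The spectral invariants enjoy several good properties; see \cite{FOOO2019}'' and the earlier blanket reference ``We refer to \cite{V,Sc,Oh,FOOO2019} for the full treatment of spectral invariants in Hamiltonian Floer theory.'' Your outline is the standard argument one finds in those references (action shift for (2), continuation energy estimates for (3) and (5), pair-of-pants energy estimate plus PSS intertwining for (4), and a filtration/spectrality argument for (1)), so it is consistent with what the cited sources do, and your caveat about virtual techniques in the general case matches the paper's own disclaimer at the start of Section~2.
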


The Lipschitz property enables us to define $c(A,H)$ for continuous $H$ using approximations. Particularly, it tells us that $c(A,H=0)=0$ and $\min H\leq c(A,H)\leq \max H$.

Let $1\in QH^{0}(M)\cong H^{0}(M)$ be the unit of the quantum cohomology ring. Then its corresponding partial symplectic quasi-state $\mu: C^0(M)\to \RR$ is defined as
$$
\mu(H):= \lim_{m\rightarrow \infty} \frac{c(1, mH)}{m}.
$$ 
Then we can define the (super)heaviness.

\begin{definition}
	For a compact set $K$ in $M$,
	\begin{enumerate}
		\item it is heavy if $\mu(H)\leq \max_{K} H$ for any smooth function $H$ on $M$;
		\item it is super-heavy if $\mu(H)\geq \min_{K} H$ for any smooth function $H$ on $M$.
	\end{enumerate}
\end{definition}

\begin{remark}
	 The above definition is different from, but equivalent to the original definition in \cite{EP09} since the sign conventions are different. One can verify it by using the duality formula
	$$
	-c(1, -H)= c^{EP}([M], H),
	$$
 see Section 4.2 in \cite{LZ}. Here $[M]\in H_{2n}(M)$ is the fundamental class.
\end{remark}

We have several handy criteria for the (super)heaviness.

\begin{lemma}
	For a compact set $K$ in $M$,
	\begin{enumerate}
		\item it is heavy if and only if for any non-negative function on $M$ that is zero on $K$, we have $\mu(H)=0$;
		\item it is heavy if and only if for any non-negative function on $M$ that is zero on $K$, we have $c(1, H)=0$.
        \item it is super-heavy if and only if for any non-positive function on $M$ that is zero on $K$, we have $\mu(H)=0$.
	\end{enumerate}
\end{lemma}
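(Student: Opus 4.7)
The overall strategy is to leverage two properties of $c$ that descend to the partial quasi-state $\mu$: the Lipschitz estimate $\min(H_1 - H_2) \leq \mu(H_1) - \mu(H_2) \leq \max(H_1 - H_2)$, and the constant-shift invariance $\mu(H + c) = \mu(H) + c$ for $c \in \RR$. Together they allow one to reshape an arbitrary Hamiltonian into the required sign-and-vanishing class on $K$, up to an $\epsilon$-error in $\mu$, by a standard smooth cutoff construction.

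For part (1), the forward direction is immediate: if $K$ is heavy and $H \geq 0$ vanishes on $K$, then $\max_K H = 0$ forces $\mu(H) \leq 0$, while Lipschitz against the zero Hamiltonian gives $\mu(H) \geq \min_M H \geq 0$. For the converse, given an arbitrary smooth $G$ I would set $c := \max_K G$ and $F := G - c$, so $F \leq 0$ on $K$. Fix $\epsilon > 0$ and choose a smooth $\phi_\epsilon : \RR \to \RR$ satisfying $\phi_\epsilon \equiv 0$ on $(-\infty, 0]$, $\phi_\epsilon(t) = t - \epsilon$ for $t \geq 2\epsilon$, $\phi_\epsilon \geq 0$, and $\phi_\epsilon(t) \geq t - \epsilon$ for all $t \in \RR$. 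Then $H_\epsilon := \phi_\epsilon \circ F$ is smooth, non-negative, vanishes on $K$, and satisfies $F \leq H_\epsilon + \epsilon$ pointwise. The hypothesis $\mu(H_\epsilon) = 0$ together with the Lipschitz and shift properties gives $\mu(G) = \mu(F) + c \leq c + \epsilon$, and letting $\epsilon \to 0$ proves $\mu(G) \leq \max_K G$.

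Assuming part (1), part (2) reduces to showing that the conditions ``$\mu(H) = 0$'' and ``$c(1, H) = 0$'' are equivalent on the class of non-negative $H$ vanishing on $K$. If $c(1, H) = 0$ on this class, then applying the hypothesis to $mH$ (which remains in the class) gives $c(1, mH) = 0$ for all $m$, hence $\mu(H) = 0$. Conversely, for autonomous $H$ one has $\phi^t_H \circ \phi^t_H = \phi^{2t}_H$, hence $H \# H = 2H$, and the triangle inequality $c(1, H \# H) \geq 2 c(1, H)$ iterated yields $c(1, mH) \geq m c(1, H)$, so $\mu(H) \geq c(1, H)$. Combined with $c(1, H) \geq 0$ (from Lipschitz applied to $H \geq 0$), the assumption $\mu(H) = 0$ forces $c(1, H) = 0$.

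Part (3) is symmetric to (1): for arbitrary smooth $G$, set $c := \min_K G$ and $F := G - c \geq 0$ on $K$, and define $H_\epsilon := -\phi_\epsilon(-F)$, which is smooth, non-positive, vanishes on $K$, and satisfies $H_\epsilon \leq F + \epsilon$. The hypothesis $\mu(H_\epsilon) = 0$ then gives $\mu(F) \geq -\epsilon$, and $\epsilon \to 0$ yields $\mu(G) \geq \min_K G$. The only nontrivial technical input is the $\mu = 0 \Rightarrow c = 0$ implication in (2), which invokes the triangle inequality specialized via $H \# H = 2H$ for autonomous Hamiltonians; the rest reduces to the cutoff construction, whose only subtlety is arranging the sign, vanishing on $K$, and $\epsilon$-approximation of $F$ simultaneously.
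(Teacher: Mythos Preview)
Your proof is correct. For part~(2) your argument coincides with the paper's: both use the triangle inequality (via $H\#H=2H$ for autonomous $H$) to get $c(1,mH)\geq m\,c(1,H)$ and hence $\mu(H)\geq c(1,H)\geq 0$, squeezing $c(1,H)=0$ from $\mu(H)=0$; the reverse direction applies the hypothesis to $mH$. The only difference is in presentation of (1) and (3): the paper simply cites Entov--Polterovich \cite{EP09} for these, whereas you supply a self-contained cutoff argument (the smooth $\phi_\epsilon$ construction) to reduce an arbitrary $G$ to the sign-constrained class up to an $\epsilon$-error in $\mu$. Your approach has the advantage of being explicit and not relying on the reader chasing the sign conventions between this paper and \cite{EP09}; the paper's has the advantage of brevity.
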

\begin{proof}
	Up to sign difference, $(1)$ and $(3)$ are proved in \cite{EP09}. So we only show $(2)$.
	 
	If for any non-negative function on $M$ that is zero on $K$ we have $c(1, H)=0$, then $c(1, kH)=0$ for any positive integer $k$. Hence $\mu(H)= 0$ for all such functions. By $(1)$ we know $K$ is heavy. 
	
	On the other hand, by the triangle inequality we know that $c(1,mH)\geq mc(1,H)$ for any positive integer $m$. Hence $\mu(H)\geq c(1,H)$. If $K$ is heavy, then $(1)$ tells us that for any non-negative function on $M$ that is zero on $K$, $\mu(H)=0$. So we get $0=\mu(H)\geq c(1,H)\geq 0$, where the last inequality is the monotonicity of spectral invariants. Hence $c(1,H)=0$ for all such functions.
\end{proof}

Given the definition of $c(1,H)$, one defines 
$$
V(H):= -c(1,H)-c(1,\bar{H})
$$
and 
$$
v:\Ham(M,\omega)\to \RR, \quad v(\phi):=\inf_{\phi=\phi^1_H}V(H).
$$
We say $V$ descends to $v$ if $v(\phi)=V(H)$ for any $H$ with $\phi=\phi_H$. For example, this is true when $(M,\omega)$ is negatively monotone by \cite[Theorem 6]{K}; also see \cite[Lemma 3.2]{McDuff}. Recall the Hofer norm is defined as
$$
\abs{H}_\Hofer:= \int_0^1 (\max_M H(t,\cdot)- \min_M H(t,\cdot))dt
$$
and
$$
\abs{\cdot}_\Hofer: \Ham(M,\omega)\to \RR, \quad \abs{\phi}_\Hofer:= \inf_{\phi=\phi^1_H}\abs{H}_\Hofer.
$$
It is known that $V(H)\leq \abs{H}_\Hofer$ and $v(\phi)\leq \abs{\phi}_\Hofer$.

Let $(M, \omega)$ be a closed symplectic manifold and $Z$ be a compact smooth hypersurface. Suppose a neighborhood of $Z$ is diffeomorphic to $(1-\epsilon,1+\epsilon)\times Z$, we write $Z_b:=\{ b\}\times Z$. 

\begin{lemma}\label{l:unbounded}
     Suppose $V$ descends to $v$. If $Z_b$ is heavy for any $b\in (1-\epsilon,1+\epsilon)$, then there is a group homomorphism 
     $$
     I:(\RR^N, \abs{\cdot}_\infty)\to \Ham(M,\omega)
     $$
     satisfying
     $$
     \abs{x}_\infty\leq v(I(x))\leq \abs{I(x)}_\Hofer\leq  2\abs{x}_\infty
     $$
     for any positive integer $N$. Here $\abs{\cdot}_\infty$ is the infinity norm
     $$
     \abs{x=(x_1,\cdots,x_N)}_\infty := \max_{1\leq i\leq N} \abs{x_i}.
     $$
\end{lemma}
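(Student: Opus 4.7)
The plan is to leverage the one-parameter family of disjoint heavy slices $Z_b$ to execute a standard packaging argument. First I would choose $N+1$ distinct values $b_0 < b_1 < \cdots < b_N$ in $(1-\epsilon, 1+\epsilon)$ and set $K_j := Z_{b_j}$, obtaining $N+1$ pairwise disjoint compact heavy subsets. Using the collar structure on $(1-\epsilon,1+\epsilon)\times Z$, I would thicken each $K_j$ to a narrow open slab $U_j$ so that the $U_j$ remain pairwise disjoint. For $j = 1, \ldots, N$ (keeping $K_0$ in reserve as a zero-anchor), I would choose an autonomous $f_j \in C^\infty(M)$ with $\mathrm{supp}(f_j) \subset U_j$, $0 \leq f_j \leq 1$, and $f_j \equiv 1$ near $K_j$. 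Setting
$$
H_x := \sum_{j=1}^N x_j f_j, \qquad I(x) := \phi^1_{H_x},
$$
the pairwise disjointness of the supports makes the flows $\phi^t_{x_j f_j}$ commute, so $I$ is a group homomorphism $(\RR^N,+) \to \Ham(M,\omega)$.

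The Hofer upper bound is immediate: $H_x$ is autonomous with $\max H_x = \max(0, x_1, \ldots, x_N)$ and $\min H_x = \min(0, x_1, \ldots, x_N)$, so $\abs{H_x}_\Hofer = \max H_x - \min H_x \leq 2\abs{x}_\infty$ and hence $\abs{I(x)}_\Hofer \leq 2\abs{x}_\infty$.

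For the spectral lower bound, I would use that heaviness of a compact $K$ implies $c(1, H) \leq \max_K H$ for every smooth $H$: by the triangle inequality $c(1,mH) \geq m\, c(1,H)$, so $c(1,H) \leq \mu(H)$, and the heavy hypothesis gives $\mu(H) \leq \max_K H$. By construction $H_x \equiv 0$ on $K_0$ and $H_x \equiv x_j$ on $K_j$ for $j \geq 1$, so applying the inequality on each of the $N+1$ slices gives
$$
c(1, H_x) \leq \min(0, x_1, \ldots, x_N).
$$
Since $H_x$ is autonomous, $\overline{H_x} = -H_x$, and the same estimate applied to $-H_x$ yields $c(1, \overline{H_x}) \leq \min(0, -x_1, \ldots, -x_N)$. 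Adding,
$$
V(H_x) = -c(1, H_x) - c(1, \overline{H_x}) \geq \max(0, x_1, \ldots, x_N) + \max(0, -x_1, \ldots, -x_N) \geq \abs{x}_\infty.
$$
Because $V$ descends to $v$ by hypothesis, this reads $v(I(x)) = V(H_x) \geq \abs{x}_\infty$, and together with the general bound $v \leq \abs{\cdot}_\Hofer$ yields the full chain.

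The proof is essentially a bookkeeping exercise, so I do not expect any serious obstacle once the collar supplies the $N+1$ disjoint heavy slices and the implication $\mu \leq \max_K \Rightarrow c(1,\cdot) \leq \max_K$ is recorded. The one subtlety worth flagging is that $\overline{H}$ is a priori a time-dependent reparametrisation of $-H$ via the flow, but for autonomous $H$ it collapses to $-H$, which is exactly what lets the heaviness estimate be applied symmetrically to both terms of $V(H_x)$; using an autonomous $H_x$ throughout is what makes the lower bound transparent.
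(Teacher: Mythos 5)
Your proposal is correct and follows essentially the same route as the paper: pick disjoint heavy slices from the collar, take autonomous bump functions with disjoint supports (so the flows commute and $I$ is a homomorphism), and use heaviness of the slices to bound $c(1,H_x)$ and $c(1,\overline{H_x})$ from above, with the Hofer bound being immediate. The only cosmetic difference is that you apply the uniform inequality $c(1,H)\leq\mu(H)\leq\max_K H$ on all $N+1$ slices simultaneously, whereas the paper runs a sign case analysis computing $c(1,\cdot)$ exactly via the criterion that non-negative functions vanishing on a heavy set have vanishing spectral invariant; both arguments rest on the same facts and your version is, if anything, slightly more streamlined.
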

\begin{proof}
    We will construct an embedding for $N=2$. The construction for general $N$ is similar.

    Consider a function $F:M\to \RR$ such that
    \begin{enumerate}
        \item $F$ is supported in $(1-\epsilon,1)\times Z$.
        \item $0\leq F\leq 1$ and $F=1$ in $(1-2\epsilon/3,1-\epsilon/3)\times Z$.
    \end{enumerate}
    Similarly, consider a function $G:M\to \RR$ such that
    \begin{enumerate}
        \item $G$ is supported in $(1,1+\epsilon)\times Z$.
        \item $0\leq G\leq 1$ and $G=1$ in $(1+\epsilon/3,1+2\epsilon/3)\times Z$.
    \end{enumerate}
    We define a map 
    $$
    I:\RR^2\to C^\infty(M), \quad I(s,t):= sF+tG.
    $$
    Next we compute the spectral invariants of $I(s,t)$.

    If $s=t=0$ then $c(1,0)=0$ and $V(0)=0$.

    If $s,t>0$ then $c(1,sF+tG)=0$ since $sF+tG$ is non-negative and $(sF+tG)^{-1}(0)$ is a heavy set. For example, it contains a heavy set $Z_1$. On the other hand, $-sF-tG+\max(s,t)$ is non-negative and its zero locus is a heavy set. For example, it contains either $Z_{1+\epsilon/2}$ or $Z_{1-\epsilon/2}$. So we have $c(1,-sF-tG)=-\max(s,t)$ and $V(sF+tG)= \max(s,t)= \abs{(s,t)}_\infty$. Similarly when $s,t<0$ we have $V(sF+tG)= \max(-s,-t)= \abs{(s,t)}_\infty$.

    If $s>0,t<0$, then $sF+tG-t$ is non-negative and its zero locus contains a heavy set $Z_{1+\epsilon/2}$. We get $c(1,sF+tG)=t$. Similarly, $c(1,-sF-tG)=-s$ and hence $V(sF+tG)=-t+s$, which is between $\abs{(s,t)}_\infty$ and $2\abs{(s,t)}_\infty$. If $s<0,t>0$ the proof is the same. In conclusion, we get $\abs{(s,t)}_\infty\leq V(I(s,t))\leq 2\abs{(s,t)}_\infty$.

    Next we consider the map
    $$
    I:\RR^2\to \Ham(M,\omega), \quad I(s,t):=\phi^1_{sF}\circ \phi^1_{tG}.
    $$
    Since $F,G$ are time-independent and have disjoint supports, their flow commute. Hence $I$ is a group homomorphism and $\phi^1_{sF+tG}=\phi^1_{sF}\circ \phi^1_{tG}$. We assume that $V$ descends to $v$, which means that $v(\phi^1_{sF}\circ \phi^1_{tG})= V(sF+tG)$. Therefore we get
    $$
    \abs{(s,t)}_\infty\leq V(sF+tG)= v(\phi^1_{sF}\circ \phi^1_{tG})\leq \abs{\phi^1_{sF}\circ \phi^1_{tG}}_\Hofer.
    $$
    On the other hand, we have 
    $$
    \abs{\phi^1_{sF}\circ \phi^1_{tG}}_\Hofer\leq \abs{sF+tG}_\Hofer\leq \abs{s}+\abs{t}\leq 2\abs{(s,t)}_\infty.
    $$
    This completes the proof for $N=2$. The proof for general $N$ is the same by picking $N$ functions with disjoint supports in $(1-\epsilon,1+\epsilon)\times Z$.

\end{proof}

This lemma uses that $V$ descends to $v$ in an essential way. Hence our theorems are restricted to the negatively monotone case, rather than just assuming Condition (\ref{eq:condition}). It would be very interesting to see how other invariants could be applied in our setup. For example, the boundary depth in \cite{Usher, KiS}.

\subsection{Undeformed quantum product}\label{sec:ud}
We record here another example in which $V$ may descend to $v$. Let $(M,\omega)$ be a closed symplectic manifold. Recall that the quantum product is defined by the 3-point genus zero Gromov–Witten invariants
$$
GW^{M,\omega}_{3,B}(A_1,A_2,A_3), \quad A_i\in H^*(M), \quad B\in H_2(M).
$$
We say the quantum product in $QH^*(M)$ is undeformed if all these invariants are zero unless $B=0$. If the quantum product is undeformed, McDuff \cite{McDuff} gave many conditions for $V$ to descend to $v$.

The following lemma is given by Jonny Evans in his answer to Question 106791 on the Mathoverflow website\footnote{https://mathoverflow.net/questions/106791/why-are-gromov-witten-invariants-of-k3-surfaces-trivial}. We record it here for the convenience of the readers.

\begin{lemma}
    Suppose that there is a smooth family of symplectic forms $\{\omega_t\}_{t\in[0,1]}$ on $M$ such that $\omega_0=-\omega_1$. Then we have the vanishing of Gromov-Witten invariants
    $$
    GW^{M,\omega_0}_{n,B}(A_1,\cdots,A_n)=0, \quad \forall A_i\in H^*(M), \quad \forall B\neq 0\in H_2(M).
    $$
\end{lemma}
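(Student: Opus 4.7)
The plan is to combine the deformation invariance of Gromov--Witten invariants along the given family $\{\omega_t\}$ with the positivity of symplectic area for pseudoholomorphic curves.

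First, I would invoke the standard fact that Gromov--Witten invariants are invariant under smooth deformations through symplectic forms. Since $\{\omega_t\}_{t\in[0,1]}$ is a smooth path of symplectic forms on the closed manifold $M$, for every class $B$ and all cohomological inputs,
$$
GW^{M,\omega_0}_{n,B}(A_1,\ldots,A_n)=GW^{M,\omega_1}_{n,B}(A_1,\ldots,A_n)=GW^{M,-\omega_0}_{n,B}(A_1,\ldots,A_n),
$$
using the hypothesis $\omega_1=-\omega_0$ in the last equality.

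Next I would use the energy identity for pseudoholomorphic curves: for any symplectic form $\eta$ on $M$ and any $\eta$-tame almost complex structure $J$, a non-constant $J$-holomorphic map $u:\Sigma\to M$ satisfies $\int_\Sigma u^*\eta>0$. Hence a stable $J$-holomorphic map representing a class $B\neq 0$ must contain at least one non-constant component, which forces $\eta(B)>0$. Contrapositively, if $B\neq 0$ and $\eta(B)\leq 0$, the moduli space of stable maps representing $B$ is empty and $GW^{M,\eta}_{n,B}(\cdots)=0$.

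The conclusion is then immediate: for any $B\in H_2(M)$ with $B\neq 0$, at least one of $\omega_0(B)\leq 0$ or $-\omega_0(B)\leq 0$ must hold. In the first case $GW^{M,\omega_0}_{n,B}$ vanishes directly by the energy argument; in the second case the same argument gives $GW^{M,-\omega_0}_{n,B}=0$, and the deformation invariance displayed above then yields $GW^{M,\omega_0}_{n,B}=0$. The only substantive point, and the one I would take most care with, is invoking deformation invariance in whichever framework the GW invariants are constructed (classical transversality for semi-positive targets, or a virtual fundamental class / polyfold setup in general). This is a structural feature of each of the standard constructions, so once it is granted the remainder of the argument is nothing more than positivity of $\eta$-area.
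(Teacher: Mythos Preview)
Your proof is correct and follows essentially the same approach as the paper: both combine deformation invariance of Gromov--Witten invariants along the path $\{\omega_t\}$ with the positivity of symplectic area on nonconstant pseudoholomorphic curves, the only cosmetic difference being that the paper phrases it as a proof by contradiction while you give a direct case split on the sign of $\omega_0(B)$. Your remark about needing semi-positivity (or a virtual framework) for deformation invariance is exactly the caveat the paper also flags.
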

\begin{proof}
    Suppose that $GW^{M,\omega_0}_{n,B}(A_1,\cdots,A_n)\neq 0$ for some $B\neq 0$. The deformation invariance property \cite[Remark 7.1.11]{MS2} shows that 
    $$
    GW^{M,\omega_0}_{n,B}(A_1,\cdots,A_n)= GW^{M,\omega_1}_{n,B}(A_1,\cdots,A_n)\neq 0.
    $$
    This implies that $\omega_0(B)>0$ and $\omega_1(B)>0$ by the definition of Gromov-Witten invariants, which is a contradiction to $\omega_0=-\omega_1$.
\end{proof}

The proof in \cite[Remark 7.1.11]{MS2} requires that all $(M,\omega_t)$ are semi-positive and expects the general case to be true via virtual techniques. Now suppose $(M,\omega)$ is a closed hyper-K\"ahler manifold. Then $\omega$ fits into a family of symplectic forms $\omega_t$ parameterized by the two-sphere $S^2$. In particular, $\omega$ and $-\omega$ can be connected by a path of symplectic forms. The first Chern class $c_1$ is zero with respect to any $\omega_t$. So we are in the semi-positive case and the above lemma tells us that the quantum product of $(M,\omega)$ is undeformed.

\section{Complements of Liouville domains}

A Liouville domain is a pair $(W,\lambda)$ where $W$ is a compact manifold with boundary and $\lambda$ is a one-form on $W$ such that
\begin{enumerate}
    \item $d\lambda$ is a symplectic form on $W$;
    \item the Liouville vector field $Y$ determined by $d\lambda(Y,\cdot)=\lambda$ is outward-pointing along $\partial W$. Particularly, the one-form $\alpha:=\lambda\mid_{\partial W}$ is contact.
\end{enumerate}
The symplectic completion of a Liouville domain $(W,\lambda)$ is a non-compact manifold 
$$
\widehat{W}:= W\cup_{\partial W} ([1,\infty)\times \partial W)
$$
equipped with a symplectic form $\omega$ which agrees with $d\lambda$ on $W$ and $\omega:= d(r\alpha)$ on the cylindrical region.

Given a contact manifold $(C,\xi=\ker\alpha)$ with a contact form $\alpha$, the Reeb vector field $R_\alpha$ on $C$ is the unique vector field satisfying
$$
\alpha(R_\alpha)=1, \quad d\alpha(R_\alpha,\cdot)=0.
$$
A Reeb orbit of period $T$ is a smooth map $\gamma: [0,T]\to C$ with $\gamma'(t)=R_\alpha(\gamma(t))$ and $\gamma(0)=\gamma(T)$. It is called non-degenerate if the linear return map, restricted to $\xi$, has no eigenvalues equal to one. A contact form is called non-degenerate if all of its closed Reeb orbits are non-degenerate. The action spectrum of $\alpha$ is the set of periods of its Reeb orbits. In the non-degenerate case, the action spectrum is a discrete subset of $\RR$.

\begin{example}\label{ex:Reeb}
    Let $L$ be a closed smooth manifold with a Riemannian metric $g$. Let $T^*L$ be the cotangent bundle of $L$ with the canonical Liouville one-form $\lambda$. Then the hypersurface 
    $$
    C= C_{1,g}T^*L:= \{(q,p)\in T^*L: \abs{p}_g=1\}
    $$
    is a contact manifold with a contact form $\alpha=\lambda\mid_C$. Any Reeb orbit on $C$ projects to a closed geodesic on $L$. Particularly, let $\gamma$ be a contractible Reeb orbit on $C$, the Conley-Zehnder index of $\gamma$ (computed in $C$) equals the Morse index of its underlying geodesic, which are non-negative. We learned this fact from \cite[Remark 1.16]{CFO} and the referee kindly points out a proof in \cite[Theorem 1.2]{Weber}. Note that if $\dim L\geq 3$ then any loop in $C$ that is contractible in $T^*L$ is also contractible in $C$.
\end{example}

Next we recall the relation between Reeb orbits and Hamiltonian orbits. Given a contact manifold $(C,\xi=\ker\alpha)$ and consider the symplectic manifold
$$
[1,\infty)\times C, \quad \omega= d(r\alpha).
$$
For a Hamiltonian $H(r,x)=f(r)$ which only depends on $r$, its Hamiltonian vector field is
$$
X_H(r,x)= -f'(r)R_\alpha.
$$
Particularly, a one-periodic orbit $\tilde{\gamma}$ of $X_H$ corresponds to a $\abs{f'(r_0)}$-periodic Reeb orbit $\gamma$ contained in the level set $\{r=r_0\}$ whenever $\abs{f'(r_0)}$ is in the action spectrum of $\alpha$. Given a trivialization of $\xi\mid_\gamma$, it induces a trivialization of $T([1,\infty)\times C)\mid_{\tilde{\gamma}}$. With respect to these trivializations, the Conley-Zehnder indices are related as
$$
\abs{CZ(\gamma) + CZ(\tilde{\gamma})} \leq 1, \quad f'(r)>0 \quad \text{and} \quad \abs{CZ(\gamma) - CZ(\tilde{\gamma})} \leq 1, \quad f'(r)<0.
$$
Note that if $f'(r)>0$ then $\tilde{\gamma}$ and $\gamma$ are in the opposite direction. If $\gamma$ is non-degenerate as a Reeb orbit, then $\tilde{\gamma}$ is \textit{transversally non-degenerate}, see \cite[Page 32]{CFHW}. The Conley-Zehnder index of $\tilde{\gamma}$ should be understood as the Robbin-Salamon index \cite{RS}. We refer to \cite{CFHW} for more detailed relations between them.

Let $\tilde{\gamma}$ be a contractible one-periodic orbit of the Hamiltonian $H(r,x)=f(r)$ which corresponds to $\abs{f'(r_0)}$-periodic Reeb orbit $\gamma$. The famous Viterbo $y$-intersection formula says that the action of $\tilde{\gamma}$ is
\begin{equation}\label{eq:intersection}
    \mathcal{A}_H(\tilde{\gamma})= f(r_0)-f'(r_0)r_0,
\end{equation}
which is the intersection point between the tangent line of $f(r)$ at $r_0$ and the $y$-axis. Note that since $W$ is an exact symplectic manifold, the action of a contractible orbit does not depend on the choice of cappings.

Now we are ready to study the complement of Liouville domains. Our proof is motivated by the Hartogs property of symplectic cohomology \cite{GV}, as well as the computation of spectral invariants for distance functions \cite{Ish}.

\begin{figure}
  \begin{tikzpicture}[yscale=0.8]
  \draw [->] (0,0)--(6,0);

  \draw [dotted] (2,4)--(2,0);
  \draw [dotted] (4,4)--(4,0);
  \node [below] at (2,0) {$1$};
  \node [below] at (4,0) {$1+\epsilon$};
 
  \node [right] at (6,0) {$r$};

  \draw (1.6,4.5) to [out=0, in=120] (2.2,4.2);
  \draw (0,4.5)--(1.6,4.5);
  \draw (2.2,4.2)--(3.55,1);
  \draw (4,0.4) to [out=300, in=180] (4.2,0.2);
  \draw [snake it] (4.2,0.2)--(5.5,0.2);

  \node [left] at (1,5) {$G_k$};

  \draw (0,3)--(1.6,3);
  \draw (1.6,3) to [out=0, in=120] (2.2,2.8);
  \draw (2.2,2.8)--(4,0.4);
  \node [left] at (1,3.5) {$F$};

  \end{tikzpicture}
  \caption{Hamiltonian functions in the collar region.}\label{fig:Ham}
\end{figure}
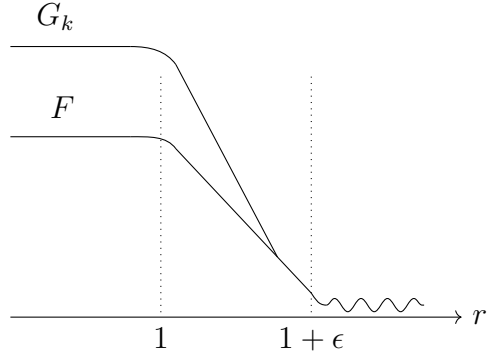

\begin{theorem}\label{t:Livoulle proof}
	Let $(M, \omega)$ be a closed rational symplectic manifold satisfying Condition $(\ref{eq:condition})$ with $\dim M =2n$. Suppose there is a Liouville domain $(K, \lambda)$ such that
	\begin{enumerate}
		\item $(K, d\lambda)$ is symplectically embedded in $(M, \omega)$ with $\pi_{1}(K)\to \pi_{1}(M)$ being injective;
		\item $\alpha:=\lambda\mid_{\partial K}$ is a non-degenerate contact form;
		\item $c_1(TK)\mid_{\pi_2(K)}=0$ and the Reeb orbits of $\lambda\mid_{\partial K}$ that are contractible in $K$ have Conley-Zehnder indices larger than $2-n$.
	\end{enumerate}
    Then $M-(K-\partial K)$ is a heavy set.
\end{theorem}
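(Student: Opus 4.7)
The plan is to verify the heaviness criterion: for every smooth $H\geq 0$ that vanishes on $M-(K-\partial K)$, show $c(1,H)=0$. The lower bound $c(1,H)\geq 0$ follows from monotonicity and $H\geq 0$, so the task is the matching upper bound. Given $H$ with $\max H=C$, I would use the Liouville flow near $\partial K$ to extend collar coordinates $(r,x)\in(1-\delta,1+\epsilon)\times\partial K\subset M$ with $K=\{r\leq 1\}$, and then construct a Hamiltonian $F\geq H$ depending only on $r$ in the collar, equal to a constant no smaller than $C$ on $K$ together with a thin inner piece of the collar, decreasing smoothly through the outer part of the collar, and extended by zero outside. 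The auxiliary family $G_k$ shown in Figure~\ref{fig:Ham} plays the same role with a tunable height parameter to provide slack for the bifurcation analysis below. By monotonicity it suffices to prove $c(1,F)=0$.

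The contractible one-periodic orbits of $F$ fall into two types under the Reeb--Hamiltonian correspondence recalled in Section~3: constant orbits in the flat regions, and non-constant orbits at levels $\{r=r_0\}$ in bijection with closed Reeb orbits of period $T=-f'(r_0)$ on $\partial K$. Because $\pi_1(K)\to\pi_1(M)$ is injective, a Reeb orbit $\gamma$ contractible in $M$ is contractible in $K$, so hypothesis~(3) forces $\mathrm{CZ}(\gamma)>2-n$. Combining this with the index comparison $|\mathrm{CZ}(\tilde\gamma)-\mathrm{CZ}(\gamma)|\leq 1$ valid when $f'<0$ gives $\deg(\tilde\gamma,u_K)=n+\mathrm{CZ}(\tilde\gamma)>1$ for the natural capping $u_K\subset\bar K$, while Viterbo's formula~(\ref{eq:intersection}) gives the action $\mathcal{A}_F(\tilde\gamma,u_K)=f(r_0)+Tr_0>0$. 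Any recapping by $B\in\pi_2(M)$ that drops the degree to $\leq 1$ requires $c_1(B)\leq -1$, and Condition~(\ref{eq:condition}) then forces $\omega(B)>0$, so the action only increases. Hence every capped Reeb-related orbit contributing to $CF^{\leq 1}$ has action bounded below by the smallest Reeb period $T_{\min}>0$.

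To conclude $c(1,F)=0$ I would run a homotopy in the family $F_s=sF$, $s\in[0,1]$. The function $s\mapsto c(1,F_s)$ is Lipschitz and takes values in the discrete set $\mathrm{Spec}_0(F_s)$ (by rationality of $(M,\omega)$). After a small Morse perturbation, $\mathrm{Spec}_0(F_s)$ splits into a "low" piece consisting of the actions of constant orbits in the outside region (close to $0$) and in the inside region (close to $sC$), and a "high" piece bounded below by $T_{\min}$ from capped Reeb-related orbits. Since $c(1,F_0)=0$ and $PSS(1)$ admits a chain-level representative supported at Morse minima in the outside region $\{F_s=0\}$, continuity of $c$ together with the $T_{\min}$ gap confines $c(1,F_s)$ to the low piece for all $s$, and a closer cohomological examination pins it at zero. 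Setting $s=1$ and using $c(1,H)\leq c(1,F)$ completes the proof.

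The main obstacle is this last step: confirming across the whole family $F_s$ that the chain representative of $PSS(1)$ can always be chosen with min-action zero, i.e., that neither the "inside" constant orbits at action $\approx sC$ nor the capped Reeb-related orbits at action $\geq T_{\min}$ ever force the spectral invariant to jump. This is exactly where Condition~(\ref{eq:condition}), the index bound $\mathrm{CZ}(\gamma)>2-n$, and the incompressibility of $K$ must act in concert, and it is also where the dimension bound $n\geq 3$ and the auxiliary family $G_k$ play their role.
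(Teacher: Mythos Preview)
Your proposal has a genuine gap at the final step. The homotopy $F_s=sF$ does not keep the degree-$0$ action spectrum fixed: a generic Morse perturbation of your constant-on-$K$ function produces index-$0$ critical points \emph{inside} $K$, and these give degree-$0$ capped orbits (with constant cappings) whose actions are approximately $sC$ and move continuously with $s$. The spectrality-plus-Lipschitz mechanism only pins $c(1,F_s)$ when the values it can take form a fixed discrete set independent of $s$; here the ``low'' piece of $\mathrm{Spec}_0(F_s)$ you describe already contains both $0$ and $sC$, so nothing prevents $c(1,F_s)$ from sliding off $0$ and onto the moving branch $sC$. The $T_{\min}$ gap separates the Reeb-type orbits from the constants, but it does not separate $0$ from $sC$, and your ``closer cohomological examination'' is precisely the missing argument. (Also, this theorem carries no hypothesis $n\geq 3$; that restriction enters only later when specializing to unit cotangent bundles.)

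The paper closes this gap with two moves you did not make. First, because $K$ is a compact manifold with nonempty boundary, one chooses the Morse function on $K$ to have \emph{no index-zero critical points}; then every orbit in $U_\epsilon$ has strictly positive degree with its inner capping, and Condition~(\ref{eq:condition}) together with rationality forces any degree-$0$ recapping to push the action above $\max F^*_t$ by at least the fixed amount $\epsilon_M>0$. Second, the homotopy is not a rescaling $sF$ but the family $G_k$ of Figure~\ref{fig:Ham}: one keeps $F$ fixed on $M-U_\epsilon$ and on the outer part of the collar, and raises the value on $K$ by $k$, interpolating via a monotone sequence $F^*_t=F^*_{0,t}\leq\cdots\leq F^*_{l,t}=G^*_{k,t}$ with uniformly small increments. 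Along this sequence the set $\mathrm{Spec}_0(F^*_{i,t})\cap(-\infty,\max F^*_t+\epsilon')$ is literally independent of $i$, so Lipschitz continuity plus spectrality force $c(1,G^*_{k,t})=c(1,F^*_t)$ for all $k$. Since every $mH$ is dominated by some $G^*_{k,t}$, one obtains a uniform bound on $c(1,mH)$ and hence $\mu(H)=0$.
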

\begin{proof}
    We always identify $K$ with its image in $M$. By using the outward pointing Liouville flow near $\partial K$, we find a neighborhood 
    $$
    U_\epsilon:= K\cup_{\partial K} ([1,1+\epsilon)\times\partial K)
    $$ 
    of $K$ in $M$, for small $\epsilon>0$. The symplectic form on $[1,1+\epsilon)\times\partial K$ is $d(r\alpha)$ where $r$ is the collar coordinate. It suffices to show $M-U_\epsilon$ is heavy. We will construct Hamiltonian functions which are Morse functions on $K$ and $M-U_\epsilon$, and only depend on $r$ in the collar region, see Figure \ref{fig:Ham}. 

    We assume that the contact form $\alpha$ is non-degenerate. Hence its action spectrum is discrete, which we write as
    $$
    Spec(\alpha)=\{a_1< a_2<\cdots\}.
    $$

    Consider a smooth function $F$ on $M$ such that
    \begin{enumerate}
        \item $F$ is a positive $C^2$-small Morse function on $M-U_\epsilon$.
        \item $F$ only depends on $r$ in the collar region $\{1\leq r\leq 1+\epsilon\}$ with $F'(r)< 0$.
        \item When $r\in [1,1+\epsilon/3]$, $F(r)$ is concave.
        \item When $r\in [1+\epsilon/3,1+2\epsilon/3]$, $F(r)$ is linear with a slope in $(-a_2, -a_1)$.
        \item When $r\in [1+2\epsilon/3,1+\epsilon]$, $F(r)$ is convex.
        \item $F$ is a Morse function on $K$ with small first and second derivatives, and with no index-zero critical points.
    \end{enumerate}
    When we say a Morse function has small derivatives, we mean that its derivatives are small enough such that it does not have non-constant one-periodic orbits. Note that $K$ is a compact manifold with boundary, property $(6)$ can be achieved by \cite[Theorem 8.1]{Milnor}.

    Next for any positive integer $k$, we construct a smooth function $G_k$ such that
    \begin{enumerate}
        \item $G_k= F+k$ on $K$.
        \item $G_k=F$ on $M-U_\epsilon$ and on $\{1+2\epsilon/3\leq r\leq 1+\epsilon\}$.

        \item $G_k$ only depends on $r$ in the collar region $\{1\leq r\leq 1+\epsilon\}$ with $G_k'(r)< 0$;
        \item When $r\in [1,1+\epsilon/3]$, $G_k(r)$ is concave.
        \item When $r\in [1+\epsilon/3,1+\epsilon/2]$, $G_k(r)$ is linear with a slope not in $-Spec(\alpha)$.
        \item When $r\in [1+\epsilon/2,1+\epsilon]$, $G_k(r)$ is convex.

    \end{enumerate}
A pictorial depiction of $G_k$ is in Figure \ref{fig:Ham}.

Now we analyze the index and action of orbits of $F$ and $G_k$. The one-periodic orbits of $F$ or $G_k$ fall into three groups.
\begin{enumerate}
    \item Constant orbits in $M-U_\epsilon$.
    \item Constant orbits in $K$.
    \item Non-constant orbits in the collar region, corresponding to Reeb orbits of $\alpha$.
\end{enumerate}
Let $\gamma$ be a non-constant orbit of $F$ or $G_k$ that is contractible in $M$. By the incompressible condition on $K$, it is also contractible in $K$. Hence $\gamma$ admits a capping $u_I$ which is contained in $K$. Let's call such a capping an \textit{inner capping}. Since $K$ is an exact symplectic manifold with $c_1(TK)=0$, the index and action of $[\gamma,u_I]$ do not change among all inner cappings. A general capping $u$ of $\gamma$ is equivalent to $u_I+B$ for some $B\in\pi_2(M)$. All the constant orbits are non-degenerate, since we assume the function is Morse and with small derivatives. The contact form is non-degenerate, which makes the non-constant Hamiltonian orbits transversally non-degenerate. Hence we only need to break the $S^1$-symmetry of each orbit. We use the standard time-dependent perturbation in \cite[Proposition 2.2]{CFHW} to get a non-degenerate Hamiltonian $G^*_{k,t}$ out of $G_k$ and $F^*_t$ out of $F$. The perturbation can be suitably chosen such that
\begin{enumerate}
    \item $G^*_{k,t}=F^*_t$ on $M-U_\epsilon$ and on $\{1+2\epsilon/3\leq r\leq 1+\epsilon\}$.
    \item With respect to an inner capping, any one-periodic orbit of $G^*_{k,t}$ or $F^*_t$ in $U_\epsilon$ has Conley-Zehnder index larger than $-n$.
\end{enumerate}
Item $(1)$ can be achieved since $G_k$ and $F$ are equal on that region, so we just choose the same perturbation for them. Item $(2)$ can be achieved by the following reasons. By assumption, the Reeb orbits of $\lambda\mid_{\partial K}$ that are contractible in $K$ have Conley-Zehnder indices larger than $2-n$. When we consider such an orbit as a Hamiltonian orbit, with respect to an inner capping, the Conley-Zehnder index possibly changes by one, depending on whether $G_k(r)$ or $F(r)$ is concave or convex at this orbit. When we perturb it to break the $S^1$-symmetry, the Conley-Zehnder (Robbin-Salamon) index possibly changes by one \cite[Proposition 2.2]{CFHW}. For constant orbits in $K$, we don't perturb them since they are already non-degenerate. Particularly, their Morse indices are positive. Hence we get $(2)$. A direct consequence of $(2)$ is that any orbit in $U_\epsilon$, with respect to an inner capping, has a positive degree.

Now let $\gamma$ be a contractible orbit of $G^*_{k,t}$ which is in $K\cup \{1\leq r\leq 1+2\epsilon/3\}$. With respect to an inner capping, its action is larger than $\max F^*_t$. This is easy to see by the Viterbo $y$-intersection formula $(\ref{eq:intersection})$ and our perturbation is small enough. Suppose there is a general capping $u_I+B$ of $\gamma$ such that $\deg([\gamma,u_I+B])=0$. Then we have $c_1(TM)(B)<0$ since $\deg([\gamma,u_I])>0$. Set 
$$
\epsilon_M:= \inf\{\omega(B)>0\mid B\in \pi_2(M)\}.
$$ 
By the rational condition on $\omega$, we have $\epsilon_M>0$ and
$$
\mathcal{A}_{G^*_{k,t}}([\gamma,u_I+B])= \mathcal{A}_{G^*_{k,t}}([\gamma,u_I])+ \omega(B)\geq \mathcal{A}_{G^*_{k,t}}([\gamma,u_I])+\epsilon_M > \max F^*_t +\epsilon_M,
$$
by Condition $(\ref{eq:condition})$. In particular, we get
$$
Spec_0(G^*_{k,t})\cap (-\infty, \max F^*_t+\epsilon')= Spec_0(F^*_{t})\cap (-\infty, \max F^*_t+\epsilon')
$$ 
for some $\epsilon_M >\epsilon' >0$. The rational condition tells us that this is a discrete subset of $\RR$. So we can fix $\epsilon''>0$ small enough such that 
$$
(c(1,F^*_t)-\epsilon'', c(1,F^*_t)+\epsilon'')\cap Spec_0(F^*_{t})\cap (-\infty, \max F^*_t+\epsilon')= \{c(1,F^*_t)\}.
$$

Next we consider a sequence of functions
$$
F^*_t= F^*_{0,t}\leq F^*_{1,t}\leq \cdots \leq F^*_{l,t}=G^*_{k,t}
$$
such that
\begin{enumerate}
    \item $F^*_{i,t}$ is non-degenerate for all $i$.
    \item With respect to an inner capping, any one-periodic orbit of $F^*_{i,t}$ in $U_\epsilon$ has Conley-Zehnder index larger than $-n$.
    \item With respect to an inner capping, any one-periodic orbit of $F^*_{i,t}$ in $K\cup \{1\leq r\leq 1+2\epsilon/3\}$ has action larger than $\max F^*_t$. 
    \item $\int_{S^1}\max_M(F^*_{i+1,t}-F^*_{i,t})<\epsilon''/5$ for all $i$.
\end{enumerate}
The construction of $F^*_{i,t}$ is exactly the same as the construction for $G^*_{k,t}$. We interpolate from $F^*_t$ to $G^*_{k,t}$ by using a sequence of functions. The variation of adjacent functions is uniformly small. Since this is a monotone sequence, we have $F^*_{i,t}$ is independent of $i$ on the region where $F^*_t=G^*_{k,t}$. Item $(2)$ and $(3)$, together with the same action computation as for $G^*_{k,t}$, show that
$$
Spec_0(F^*_{i,t})\cap (-\infty, \max F^*_t+\epsilon')
$$ 
is independent of $i$. Item $(4)$, together with the Lipschitz property of spectral invariants show that $\abs{c(1,F^*_{0,t})-c(1,F^*_{1,t})}<\epsilon''/5$. Hence $c(1,F^*_{0,t})=c(1,F^*_{1,t})$ by the choice of $\epsilon''$ and the spectrality property. Repeat this process for $i$ inductively, we know that $c(1,F^*_{i,t})$ is independent of $i$. Particularly, we have $c(1,F^*_t)=c(1,G^*_{k,t})$.

Finally, pick any non-negative smooth function $H$ on $M$ that is zero on $M-U_\epsilon$, for any positive integer $m$, there exists some $k$ such that $G^*_{k,t}\geq mH$. Hence $c(1,mH)$ is uniformly bounded in $m$, which implies that $\mu(H)=0$. This shows that $M-U_\epsilon$ is heavy.

\end{proof}

\begin{remark}
    The above proof shows that if a non-negative Hamiltonian $H$ is supported in $K$ then $c(1,H)=0$. It would be interesting to explore whether the spectral invariants have certain locality property as in \cite{GT,T}.
\end{remark}

Now we give a technical enhancement of Theorem \ref{t:Livoulle proof}.

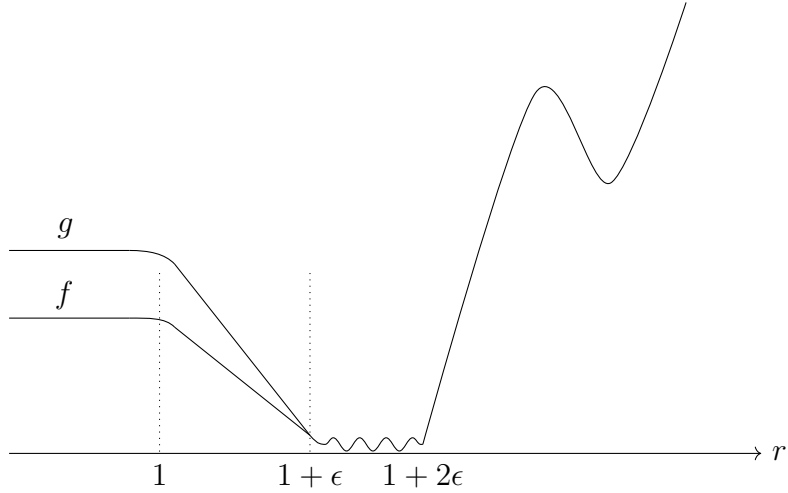
\begin{figure}
  \begin{tikzpicture}[yscale=0.6]
  \draw [->] (0,0)--(10,0);

  \draw [dotted] (2,4)--(2,0);
  \draw [dotted] (4,4)--(4,0);
  \node [below] at (2,0) {$1$};
  \node [below] at (4,0) {$1+\epsilon$};
  \node [below] at (5.5,0) {$1+2\epsilon$};

  \node [right] at (10,0) {$r$};

  \draw (1.6,4.5) to [out=0, in=120] (2.2,4.2);
  \draw (0,4.5)--(1.6,4.5);
  \draw (2.2,4.2)--(4,0.4);
  \draw (4,0.4) to [out=300, in=180] (4.2,0.2);
  \draw [snake it] (4.2,0.2)--(5.5,0.2);
  \draw plot [smooth] coordinates {(5.5,0.2) (7,8) (8,6) (9,10)};

  \node [left] at (1,5) {$g$};

  \draw (0,3)--(1.6,3);
  \draw (1.6,3) to [out=0, in=120] (2.2,2.8);
  \draw (2.2,2.8)--(4,0.4);
  \node [left] at (1,3.5) {$f$};

  \end{tikzpicture}
  \caption{Hamiltonian functions in the collar region.}\label{fig:Ham boundary}
\end{figure}

\begin{theorem}\label{t:boundary proof}
    Under the assumptions of Theorem \ref{t:Livoulle proof}, suppose further $K$ is heavy, then the set $\partial K$ is heavy.
\end{theorem}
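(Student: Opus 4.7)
The plan is to mimic the construction from the proof of Theorem \ref{t:Livoulle proof}, replacing $G_k$ by a Hamiltonian $g_k$ whose radial profile is sketched in Figure \ref{fig:Ham boundary}: large ($\approx k+f$) both on $K$ and on $M - U_{2\epsilon}$, with a narrow valley through which it passes to a small value in an annular region straddling $\partial K$. Concretely, after extending the Liouville collar to $[1, 1+2\epsilon] \times \partial K \subset M$, I construct $g_k$ equal to $f + k$ on $K$ and on $M - U_{2\epsilon}$ (where $f$ is the small Morse function from the proof of Theorem \ref{t:Livoulle proof}), radially decreasing on $[1, 1+\epsilon/3]$ with linear slope in $(-a_2, -a_1)$, constant (small) on a middle flat region $[1+\epsilon/3, 1+5\epsilon/3]$, and radially increasing on $[1+5\epsilon/3, 1+2\epsilon]$ with linear slope in $(a_1, a_2)$, with concave/convex smoothings at the four corners. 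A generic time-dependent perturbation $g_{k,t}^*$ is non-degenerate, and its non-constant contractible one-periodic orbits lie in the two sloped legs and correspond to once-traversed Reeb orbits of $\alpha$.

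Next, I would repeat the Conley-Zehnder index-and-action bookkeeping from the proof of Theorem \ref{t:Livoulle proof} for orbits in each leg. For the decreasing leg the analysis is identical: with respect to an inner capping the orbits have positive degree and action exceeding $\max f^*_t + \epsilon_M$ in degree zero. For the increasing leg, the Conley-Zehnder relation becomes $|CZ(\gamma) + CZ(\tilde\gamma)| \leq 1$ instead of $|CZ(\gamma) - CZ(\tilde\gamma)| \leq 1$, but since the slope window $(a_1, a_2)$ detects only the shortest Reeb orbit (with $CZ(\gamma) > 2-n$ by hypothesis), combining with rationality, Condition (\ref{eq:condition}), and incompressibility still yields the same lower bound on the degree-zero action at $\max f^*_t + \epsilon_M$. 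Constant orbits on $K$ and on $M - U_{2\epsilon}$ have actions $\approx k + \max f$, far above the spectral window of interest once $k$ is large, while constant orbits in the middle flat contribute actions $\approx \max f$, matching those of $f_t^*$. The monotone-interpolation argument then applies verbatim to a sequence $f_t^* = f_{0,t}^* \leq \cdots \leq f_{l,t}^* = g_{k,t}^*$ with uniformly small increments to conclude $c(1, g_{k,t}^*) = c(1, f_t^*)$, independent of $k$.

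Finally, given $H \geq 0$ with $H|_{\partial K} = 0$ and any $m \in \NN$, the continuity of $H$ together with the vanishing on $\partial K$ lets me choose $\epsilon$ small enough that $mH < \max f$ throughout $[1, 1+2\epsilon] \times \partial K$, and then pick $k$ large enough that $g_k \geq mH$ everywhere on $M$. Monotonicity of spectral invariants gives $c(1, mH) \leq c(1, g_{k,t}^*) = c(1, f_t^*)$, a bound independent of $m$, so $\mu(H) = 0$ and $\partial K$ is heavy. The main obstacle is the Conley-Zehnder computation on the increasing leg: restricting the slope to the first Reeb gap is crucial here, since otherwise multiply-covered Reeb orbits with large $CZ(\gamma)$ would produce Hamiltonian orbits of very negative index with respect to inner cappings, potentially spoiling the degree-zero action bound. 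The additional assumption that $K$ is heavy enters in legitimizing the outer bump in $g_k$: because any non-negative Hamiltonian supported on $M - K$ has vanishing $\mu$, the large values of $g_k$ on $M - U_{2\epsilon}$ are consistent with spectral invariants of order $\max f$, so the outer part of the profile does not create a genuine obstruction to the equality $c(1, g_{k,t}^*) = c(1, f_t^*)$.
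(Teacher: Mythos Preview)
There is a genuine gap in your treatment of the outer (increasing) leg, and the construction is internally inconsistent as stated. If the radial slope of $g_k$ lies in $(-a_2,-a_1)\cup(a_1,a_2)$ throughout the two legs, then over a collar of width $2\epsilon$ the function can vary by at most $2a_2\epsilon$; it cannot descend from $f+k$ to a small value and climb back to $f+k$ once $k$ is large. If instead the corner smoothings absorb the height change, their slopes sweep through all of $Spec(\alpha)$ and create Hamiltonian orbits of every multiplicity, precisely the situation you say must be avoided. More fundamentally, on the increasing leg the relation $|CZ(\gamma)+CZ(\tilde\gamma)|\le 1$ together with the hypothesis $CZ(\gamma)>2-n$ gives only an \emph{upper} bound on $CZ(\tilde\gamma)$, not the lower bound you need for positive inner-capping degree; and by the $y$-intercept formula (\ref{eq:intersection}) the inner-capping action there is $g_k(r_0)-g_k'(r_0)r_0\approx -a_1$, which lies \emph{below} $\min f^*_t$ rather than above $\max f^*_t$. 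When the inner-capping degree is positive, correcting to degree zero forces $c_1(B)<0$, hence $\omega(B)>0$, and the action rises by an uncontrolled amount; nothing prevents it from landing inside $(\min f^*_t,\max f^*_t)$. So the degree-zero spectra of $g^*_{k,t}$ and $f^*_t$ need not agree on the relevant window, and the monotone interpolation breaks. Your closing appeal to the heaviness of $K$ does not repair this: heaviness gives $c(1,H)=0$ only for $H\ge 0$ vanishing on $K$, whereas your $g_k$ equals $f+k$ on $K$.

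The paper circumvents the increasing leg entirely. Its starting function $f$ is already allowed to be an \emph{arbitrary} positive function on $\{r\ge 1+2\epsilon\}$, while on $U_\epsilon$ it is kept below a small $\delta<\epsilon_M/3$; the heaviness of $K$ (hence of $U_\epsilon$) is then invoked once, at the level of $f$, to deduce $c(1,f)<\delta$ because $f$ is $\delta$-close to a non-negative function vanishing on the heavy set $U_\epsilon$. After that only the inner (decreasing) leg is raised, where the index/action argument you describe for that side is valid and yields $c(1,g^*_t)=c(1,f^*_t)<2\delta$, independently of how large $f$ was chosen on $\{r\ge 1+2\epsilon\}$. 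This is the precise role of the extra heaviness hypothesis: it replaces, rather than supplements, any index computation on the outer side.
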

\begin{proof}
    We continue to use the notations in Theorem \ref{t:Livoulle proof}. We first aim to prove that $\{1+\epsilon \leq r \leq 1+2\epsilon\}$ is heavy and then use continuity of spectral invariants to finish the proof.

    For small $\epsilon,\delta>0$, consider a smooth function $f$ on $M$ such that
    \begin{enumerate}
        \item $f$ is a positive $C^2$-small Morse function on $\{1+\epsilon\leq r\leq 1+2\epsilon\}$.
        \item $f$ only depends on $r$ in the collar region $\{1\leq r\leq 1+\epsilon\}$ with $f'(r)< 0$.
        \item When $r\in [1,1+\epsilon/3]$, $f(r)$ is concave.
        \item When $r\in [1+\epsilon/3,1+2\epsilon/3]$, $f(r)$ is linear with a slope not in $-Spec(\alpha)$.
        \item When $r\in [1+2\epsilon/3,1+\epsilon]$, $f(r)$ is convex.
        \item $f$ is a Morse function on $K$ with small first and second derivatives, and with no index-zero critical points, and $0<f<\delta<\epsilon_M/3$ on $U_\epsilon$.
        \item When $r\geq 1+2\epsilon$, $f$ is a positive function without other constraints.
    \end{enumerate}
    The construction of $f$ is the same as $F$ in Theorem \ref{t:Livoulle proof}. By $(6)$, there exists a non-negative function $\tilde{f}$ which is zero on $U_\epsilon$ and $\abs{f-\tilde{f}}<\delta$. We assume $K$ is heavy, hence $U_\epsilon$ is heavy. So we get $c(1,\tilde{f})=0$ and $0\leq c(1,f)<\delta$. Then we use time-dependent perturbation to get a non-degenerate $f^*_t$. The perturbation is chosen to be arbitrarily small such that  $-2\delta\leq c(1,f^*_t)<2\delta$. By the rational condition on $\omega$, we can fix some $\epsilon''>0$ such that
    $$
    (c(1,f^*_t)-\epsilon'', c(1,f^*_t)+\epsilon'')\subset (-2\delta,2\delta), \quad (c(1,f^*_t)-\epsilon'', c(1,f^*_t)+\epsilon'')\cap Spec_0(f^*_{t})= \{c(1,f^*_t)\}.
    $$

    Next, we slowly raise the level of $f$ in $U_\epsilon$, as in Theorem \ref{t:Livoulle proof}. Consider a function $g\geq f$ on $M$ such that
    \begin{enumerate}
        \item $g=f+C$ on $K$ for some positive $C$. 
        \item $g=f$ when $r\geq 1+2\epsilon/3$.
        \item $g$ only depends on $r$ in the collar region with $g'(r)< 0$.
        \item When $r\in [1,1+\epsilon/3]$, $g(r)$ is concave.
        \item When $r\in [1+\epsilon/3,1+2\epsilon/3]$, $g(r)$ is linear with a slope not in $-Spec(\alpha)$.
        \item When $r\in [1+2\epsilon/3,1+\epsilon]$, $g(r)$ is convex.
        \item $\int_{S^1}\max(g-f)<\epsilon''/5$.
    \end{enumerate}
    The construction of $g$ is the same as $G$ in Theorem \ref{t:Livoulle proof}. A pictorial depiction is in Figure \ref{fig:Ham boundary}. Then we perturb $g$ to get a non-degenerate $g^*_t$ with $\int_{S^1}\max(g^*_t-f^*_t)<\epsilon''/3$. On the region where $g=f$, we chose the same perturbation such that $f^*_t=g^*_t$. Note that an orbit $\gamma$ of $g^*_t$ which is not an orbit of $f^*_t$ is in $U_\epsilon$. The same index and action computation as in Theorem \ref{t:Livoulle proof} shows that if $\deg([\gamma,u])=0$ then its action is larger than $2\delta$. Recall that we chose $\delta<\epsilon_M/3$. Therefore by the Lipschitz property we get $c(1,g^*_t)=c(1,f^*_t)<2\delta$.

    Given a function $h$ on $M$ which is less than $f$ on $\{r\geq 1+\epsilon\}$, we can inductively raise the value of $g^*_t$ on $U_\epsilon$ to get a function $\tilde{g}^*_t\geq h$. The above procedure tells us $c(1,\tilde{g}^*_t)=c(1,f^*_t)$. Hence $c(1,h)\leq 2\delta$. On the other hand, the above argument only requires $f$ is a fixed positive function on $\{r\geq 1+2\epsilon\}$ and the estimate $c(1,g^*_t)<2\delta$ does not depend on this choice. Therefore given any function $h$ which is zero on $\{1+\epsilon\leq r\leq 1+2\epsilon\}$, it is bounded from above by such a $\tilde{g}^*_t$ and we have $c(1,h)<2\delta$. This shows the set $\{1+\epsilon\leq r\leq 1+2\epsilon\}$ is heavy. Since $\epsilon$ can be arbitrarily small, this shows $\partial K=\{r=1\}$ is heavy.

\end{proof}

Now we prove Theorem \ref{t:main lag}. 

\begin{proof}
    [Proof of Theorem \ref{t:main lag}]
    Let $L$ be an incompressible Lagrangian in $M$. Pick any Weinstein neighborhood $U$ of $L$. Without losing generality, we assume $U$ is induced by a non-degenerate Riemannian metric, which is a generic condition. Hence all of its closed geodesics are non-degenerate. When $\dim L\geq 3$, any Reeb orbit that is contractible in $U$ is also contractible in $\partial U$. Therefore its Conley-Zehnder index is non-negative, by Example \ref{ex:Reeb}, and we are in the situation of Theorem \ref{t:Livoulle proof}.
\end{proof}

Similarly, Theorem \ref{t:boundary} follows from Theorem \ref{t:boundary proof}.

\section{Lagrangian spheres}\label{sec:sphere}

Lagrangian spheres can appear in projective varieties as vanishing cycles in a degeneration process. The importance of this perspective in symplectic topology is noticed by Arnold \cite{Arnold} and Donaldson \cite{Don}. Here we recall two existence results by Seidel and Biran-Jerby.

\begin{theorem}[Corollary 4.4 in \cite{Seidel}]
    Any smooth complete intersection in $\CC P^n$ which is non-trivial (that is, not an intersection of hyperplanes) contains a Lagrangian sphere.
\end{theorem}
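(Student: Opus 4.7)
The plan is to realize the Lagrangian sphere as the vanishing cycle of a degeneration of the complete intersection to a nodal variety. Fix the multi-degree $(d_1, \ldots, d_k)$ of the complete intersection $M \subset \CC P^n$. The non-triviality hypothesis forces $d_i \geq 2$ for at least one index $i$; this is crucial, since otherwise the intersection is itself a projective subspace and the parameter space of such tuples is a Grassmannian containing no singular members.

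First I would construct an auxiliary nodal complete intersection $M_0 \subset \CC P^n$ of the same multi-degree possessing exactly one ordinary double point. Starting from a smooth representative, I would perturb a defining polynomial of degree $\geq 2$ by a local quadratic Morse-type term concentrated near a chosen point so that the new zero locus acquires precisely one node, keeping all other components smooth by a Bertini argument on the linear subsystem. Next I would join $M$ and $M_0$ by a generic one-parameter family $\{M_t\}_{t \in D}$ inside the parameter space $\mathcal{U}$ of complete intersections of the given multi-degree. A generic linear path meets the discriminant locus $\Delta \subset \mathcal{U}$ transversely at the single interior point corresponding to $M_0$, so the total space of the family near $t = 0$ is a Lefschetz-type singularity.

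Once such a Lefschetz degeneration is in hand, the Lagrangian sphere arises from the Picard-Lefschetz vanishing cycle construction. By the holomorphic Morse lemma, one obtains local coordinates $(z_1, \ldots, z_m)$ on the total space near the node, with $m = \dim_\CC M$, in which the family looks like
$$\{z_1^2 + \cdots + z_m^2 = t\}.$$
For $t > 0$ small and real, the real slice
$$S_t := \{(x_1, \ldots, x_m) \in \RR^m \mid x_1^2 + \cdots + x_m^2 = t\}$$
is a Lagrangian sphere with respect to the standard Kähler form in these coordinates. Using a symplectic connection on the family (which exists away from the critical point), symplectic parallel transport carries $S_t$ to a Lagrangian sphere in every nearby smooth fiber, and hence along the path $D$ to a Lagrangian sphere $S \subset M$ for the restricted Fubini-Study form. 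That the ambient symplectic form is Fubini-Study rather than the local Kähler form is harmless: Moser's trick in a Darboux neighborhood of the critical fiber converts the local model to the ambient symplectic form while preserving Lagrangian submanifolds, up to Hamiltonian isotopy.

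The main obstacle is producing the nodal degeneration $M_0$ of the prescribed multi-degree and arranging that $M$ can be connected to it within $\mathcal{U}$ by a path whose only discriminantal crossing is a single transverse ordinary node. This is essentially an incidence/dimension count on $\Delta$: the component parameterizing single-node complete intersections is a hypersurface in $\mathcal{U}$, and generic position of a linear path yields the desired transverse intersection. Precisely this step fails in the trivial case where all $d_i = 1$, so the non-triviality assumption is exactly what is needed. Once the degeneration is constructed, the rest of the argument is standard Picard-Lefschetz theory and Moser's trick.
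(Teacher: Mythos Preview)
The paper does not prove this statement at all: it is quoted verbatim as Corollary~4.4 of Seidel's thesis and used as a black box to feed Lagrangian spheres into Theorem~\ref{t:main}. So there is no ``paper's own proof'' to compare against.

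That said, your proposal is the standard vanishing-cycle argument and is essentially how Seidel obtains the result. A couple of small points. First, you do not really need to construct a path from your specific $M$ to a specific nodal $M_0$: the parameter space $\mathcal{U}$ of smooth complete intersections of a fixed multi-degree is connected (the discriminant $\Delta$ has complex codimension $\geq 1$, hence real codimension $\geq 2$), so it suffices to exhibit \emph{some} nodal degeneration and then invoke that any two smooth fibers are symplectomorphic by Moser's argument along a path in $\mathcal{U}\setminus\Delta$. Second, your remark that the non-triviality hypothesis is exactly what allows a nodal member to exist is the key point, and it is correct; when all $d_i=1$ the family is the Grassmannian of linear subspaces and every member is smooth. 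With these cosmetic adjustments your outline is a complete proof.
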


\begin{theorem}
    [Theorem 8.A in \cite{BJ}]
    Let $X\subset\CC P^n$ be an algebraic manifold and $\Sigma\subset X$ a hyperplane section. If the defect of $X$ is zero then $\Sigma$ contains a Lagrangian sphere.
\end{theorem}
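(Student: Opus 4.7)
The plan is to produce a Lagrangian sphere in $\Sigma$ via the symplectic Picard--Lefschetz vanishing-cycle construction, applied to a one-parameter degeneration of hyperplane sections of $X$; the defect-zero hypothesis is precisely what delivers such a degeneration inside the linear system of hyperplane sections.

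First I would unpack the defect. By definition $\mathrm{def}(X)=n-1-\dim X^{*}$, where $X^{*}\subset(\CC P^{n})^{*}$ is the projective dual variety, so the hypothesis $\mathrm{def}(X)=0$ says $X^{*}$ is a hypersurface. By classical projective geometry, at a generic smooth point $[H_{0}]\in X^{*}$ the contact locus of $H_{0}$ with $X$ --- equivalently the singular locus of $X\cap H_{0}$ --- has dimension equal to $\mathrm{def}(X)=0$ and is an isolated ordinary double point $p_{0}$. I would then pick a smooth complex arc $\gamma\colon(\CC,0)\to((\CC P^{n})^{*},[H_{0}])$ transverse to $X^{*}$ at $0$, so that $\gamma(t)\notin X^{*}$ for $t$ in a small punctured disk. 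This produces a one-parameter family $\Sigma_{t}:=X\cap H_{\gamma(t)}$ of hyperplane sections that is smooth for $t\neq 0$ and acquires a single node at $t=0$. In suitable local holomorphic coordinates on $X$ centred at $p_{0}$, the total space of the family takes the standard Lefschetz form $z_{1}^{2}+\cdots+z_{N}^{2}=t$ with $N=\dim_{\CC}X$, and for real $t>0$ the slice $\Sigma_{t}$ contains the real round sphere $S_{t}=\{\mathrm{Im}\,z_{i}=0,\ \sum x_{i}^{2}=t\}$ of real dimension $N-1$, which is exactly half the real dimension of $\Sigma_{t}$.

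It remains to promote the topological sphere $S_{t}$ to a genuine Lagrangian sphere for the restriction of the Fubini--Study form, and to transport it into the given smooth hyperplane section $\Sigma$. I would apply a relative Moser argument in a neighbourhood of $p_{0}$ to bring $\omega_{FS}$ into its Euclidean K\"ahler normal form in the local holomorphic coordinates above, in which $S_{t}$ is manifestly Lagrangian; symplectic parallel transport along $\gamma$ inside the smooth locus of the family then identifies $\Sigma_{t}$ with $\Sigma$ for $t$ small and carries $S_{t}$ into a Lagrangian sphere in $\Sigma$. I expect the main obstacle to be the algebro-geometric input of the second paragraph --- extracting from $\mathrm{def}(X)=0$ a generic singular hyperplane section with exactly one ordinary nodal singularity --- since once this nodal degeneration is in place, the symplectic Picard--Lefschetz package going back to Arnold, Donaldson, and Seidel routinely produces a Lagrangian vanishing sphere.
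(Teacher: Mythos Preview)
The paper does not prove this theorem; it is quoted without proof as Theorem 8.A of Biran--Jerby \cite{BJ} and used only as a black-box source of Lagrangian spheres. So there is no ``paper's own proof'' to compare against. Your outline is correct and is essentially the argument Biran and Jerby give: defect zero forces the dual variety $X^{*}$ to be a hypersurface, a generic smooth point of $X^{*}$ corresponds to a hyperplane tangent to $X$ at a single ordinary double point, and a transverse pencil through that point yields a Lefschetz-type degeneration whose vanishing cycle is a Lagrangian sphere in the smooth fibre, carried into $\Sigma$ by symplectic parallel transport. The one step worth tightening is the passage from ``the contact locus has dimension $\mathrm{def}(X)=0$'' to ``the singularity is an ordinary node''; the former is the definition of defect, but the nodal nature at a generic point of $X^{*}$ requires an additional argument (biduality plus non-degeneracy of the second fundamental form at a generic point of $X^{*}$, or equivalently the existence of Lefschetz pencils), which you correctly flag as the algebro-geometric input to be checked.
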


Let us refer to \cite{BJ} for the definition of defect, and only mention that `most' algebraic manifolds have defect zero.

Now we discuss the Floer cohomology of Lagrangian spheres.

\begin{theorem}[Theorem 4.1 in \cite{W}]
    Let $(M,\omega)$ be a symplectic manifold with dimension larger than two. Suppose $M$ is symplectic Calabi-Yau or negatively monotone, and let $S$ be a Lagrangian sphere in $M$. For any $E>0$, there exists an open subset $U_E$ of compatible almost complex structures on $M$ such that $S$ does not bound any non-constant $J$-holomorphic disks with energy less than $E$ when $J\in U_E$.
\end{theorem}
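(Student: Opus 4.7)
The plan is to combine the classical doubling construction for Lagrangian spheres with the monotonicity lemma and Gromov compactness. The central observation is that a Weinstein neighborhood of $S$ carries a canonical antisymplectic involution fixing $S$ pointwise.

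First, I would invoke the Weinstein neighborhood theorem to identify a tubular neighborhood $W$ of $S$ in $M$ with an open neighborhood of the zero section in $T^{*}S$ equipped with its canonical symplectic structure. The fiberwise negation $\iota(q,p)=(q,-p)$ is an antisymplectic involution of $W$ fixing $S$ pointwise. I would then build a compatible almost complex structure $J_{0}$ on $M$ whose restriction to $W$ is anti-$\iota$-equivariant, meaning $d\iota\circ J_{0}=-J_{0}\circ d\iota$; such a $J_{0}$ is produced by $\iota$-averaging an arbitrary compatible almost complex structure inside $W$ and interpolating smoothly to an arbitrary compatible one outside. The doubling step then reads as follows: for any $J_{0}$-holomorphic disk $u\colon(D^{2},\partial D^{2})\to(W,S)$, the formula
$$
\tilde{u}(z)=\begin{cases}u(z),&|z|\le 1,\\ \iota(u(1/\bar{z})),&|z|\ge 1,\end{cases}
$$
defines a $J_{0}$-holomorphic sphere $\tilde{u}\colon S^{2}\to W$ of symplectic area $2\omega(u)$. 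Because $W$ is exact as a subdomain of $T^{*}S$, every closed $J_{0}$-holomorphic curve in $W$ has vanishing symplectic area, forcing $u$ to be constant.

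Second, to introduce the energy cutoff $E$, I would appeal to the monotonicity lemma for pseudoholomorphic curves. Monotonicity furnishes a constant $\delta>0$, depending on $J_{0}$ and on a smaller subneighborhood $W'$ whose closure lies in $W$, such that every $J_{0}$-holomorphic disk with boundary on $S$ whose image leaves $W'$ has energy at least $\delta$. By rescaling the Weinstein model so that $\delta>E$, every $J_{0}$-holomorphic disk with boundary on $S$ of energy below $E$ is confined to $W$, and is therefore constant by the doubling step.

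Finally, openness of $U_{E}$ around $J_{0}$ would follow from Gromov compactness. If $J_{k}\to J_{0}$ admitted non-constant $J_{k}$-holomorphic disks of energy at most $E'<E$, a subsequence would Gromov-converge to a $J_{0}$-holomorphic stable disk of total energy at most $E'$; the Calabi-Yau or negatively monotone hypothesis on $(M,\omega)$ constrains sphere bubbles enough to guarantee a non-constant disk component in the limit, contradicting the property established for $J_{0}$. The main obstacle lies precisely in this last step: a careful analysis of Gromov compactness with Lagrangian boundary, ruling out configurations in which all disk components of the limit are constant while the energy escapes into a tree of sphere bubbles. This is where the topological hypotheses on $(M,\omega)$ enter essentially.
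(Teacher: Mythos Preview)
The paper does not supply a proof of this statement; it is quoted from Welschinger \cite{W} and invoked as a black box in the proof of Corollary~\ref{co:sphere}. So there is no in-paper argument against which to compare your attempt.

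That said, your sketch contains a genuine gap in the monotonicity step. The constant $\delta$ is governed by the geometry of the embedded Weinstein neighborhood $W\subset M$ together with $J_0$: roughly, $\delta$ is bounded above by (a constant times) the square of the distance from $S$ to $\partial W$ in the ambient metric. You cannot ``rescale the Weinstein model so that $\delta>E$'', because the symplectic embedding of a neighborhood of the zero section of $T^*S$ into $(M,\omega)$ has a maximal size fixed once and for all by the ambient geometry; no rescaling of the abstract model $T^*S$ enlarges the image in $M$. What the doubling-plus-monotonicity argument actually establishes is only that for the anti-invariant $J_0$ there exists \emph{some} $\delta_0>0$, determined by $(M,\omega,S)$, such that $S$ bounds no nonconstant $J_0$-disk of energy below $\delta_0$. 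Upgrading $\delta_0$ to an arbitrary prescribed $E$ is precisely the nontrivial content of Welschinger's theorem and requires a further mechanism that genuinely exploits the Calabi--Yau or negatively monotone hypothesis, not merely a local area estimate. A related soft spot appears in your openness step: the assertion that the hypothesis on $(M,\omega)$ ``constrains sphere bubbles enough'' to force a nonconstant disk component in the Gromov limit is not justified as stated, since a constant ghost disk decorated with a tree of $J_0$-holomorphic spheres through a point of $S$ is not excluded by those hypotheses alone without an additional index and genericity argument.
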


\begin{corollary}\label{co:sphere}
    Any incompressible Lagrangian sphere in a closed symplectic manifold that is symplectic Calabi-Yau or negatively monotone has non-zero Floer cohomology. In particular, it is heavy.
\end{corollary}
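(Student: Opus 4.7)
The plan is to show that the Floer cohomology of $S$ is well-defined (with a possibly non-zero bounding cochain) and non-vanishing by suppressing low-energy pseudo-holomorphic disks via the cited Theorem 4.1 of \cite{W} and then reducing the Fukaya algebra to its Morse model.

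First I record the Maslov-index behavior. Since $\dim M > 2$, the sphere $S = S^n$ has $n \geq 2$ and is simply connected, so the relative $\pi_2$ sequence gives a surjection $\pi_2(M) \twoheadrightarrow \pi_2(M, S)$, and the Maslov index of a disk bounded by $S$ is determined by $c_1(TM)|_{\pi_2(M)}$. In the Calabi-Yau case every disk $\beta$ has $I(\beta) = 0$; in the negatively monotone case with monotonicity constant $\tau < 0$, every non-constant disk satisfies $I(\beta) = 2\tau\,\omega(\beta) < 0$. Next I invoke Theorem 4.1 of \cite{W}: for each energy cutoff $E > 0$, choose a compatible almost complex structure $J_E \in U_E$ such that $S$ bounds no non-constant $J_E$-holomorphic disks of energy below $E$. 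In the filtered Fukaya $A_\infty$-algebra of $S$ in the sense of \cite{FOOO2019}, the operations $m_k$, truncated modulo $T^E$, therefore reduce to their constant-disk (Morse) contributions. In particular the truncated $m_0$ vanishes and the truncated $m_1$ is the Morse differential on $S^n$, so the truncated Floer cohomology equals $H^*(S^n)$ tensored with the corresponding quotient of the Novikov ring, which is non-zero in degrees $0$ and $n$. Passing to the inverse limit via continuation-map comparisons between the different $J_E$'s, together with the Mittag--Leffler/completeness argument standard in the FOOO framework, yields a bounding cochain $b$ on $S$ with $HF(S; b) \neq 0$. In the negatively monotone case the strict negativity of $I(\beta)$, combined with the degree and dimension counts for the disk moduli, rules out non-constant disk contributions to $m_0$ and $m_1$ outright, so the zero cochain works directly and no inverse limit is needed. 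Finally, Theorem 1.6 of \cite{FOOO2019} upgrades non-vanishing Floer cohomology (possibly with bounding cochains) to heaviness, completing the proof.

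The principal technical obstacle is the Calabi-Yau inverse-limit step: the $J_E$'s vary with $E$ and the bounding cochains produced on each truncation must be glued into a single coherent one using continuation maps within the FOOO formalism. The negatively monotone case, by contrast, is essentially Morse-theoretic once the Maslov-index constraint is recorded, and could be written up with only the cited vanishing of low-energy disks plus bookkeeping.
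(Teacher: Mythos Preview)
Your approach is essentially the same as the paper's: both use Welschinger's theorem to kill disks below each energy cutoff and then appeal to the energy-filtered FOOO construction. The paper, however, streamlines the step you flag as the principal obstacle. Rather than producing bounding cochains at each truncation and gluing them via continuation maps and a Mittag--Leffler argument, the paper observes that the FOOO inductive construction of $\{\mathfrak{m}_k\}$ already takes as input a sequence $J_1, J_2, \ldots$ of almost complex structures indexed by increasing energy bounds $E_1 < E_2 < \cdots$, with $\mathfrak{m}_k$ built as a homotopy limit of the truncated operations $\mathfrak{m}_k^i$ (see \cite[Remark 22.26]{FOOO2020}). Choosing each $J_i \in U_{E_i}$ via Welschinger makes every $\mathfrak{m}_1^i$ equal to the de~Rham differential, so the limit $\mathfrak{m}_1$ is the de~Rham differential on the nose; in particular $\mathfrak{m}_0 = 0$ and the zero bounding cochain already works. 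No separate continuation or gluing is required because the varying $J$ is absorbed into the construction itself.

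Your separate shortcut for the negatively monotone case---ruling out $m_0, m_1$ contributions purely by Maslov index and dimension count---is not in the paper and would need more care: negative Maslov index alone does not guarantee transversality or rule out multiply covered disks. The paper treats both cases uniformly through Welschinger's theorem, and its subsequent remark indicates that a direct argument via Lazzarini's structure theorem \cite{La} is only known to work when the manifold is sufficiently negative (e.g.\ degree-$d$ hypersurfaces in $\CC P^n$ with $d \geq 2n-1$), not for arbitrary negatively monotone manifolds.
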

\begin{proof}
    An incompressible Lagrangian circle has non-zero Floer cohomology, since it does not bound holomorphic disks by a topological reason. In higher dimensions, one can use the above theorem to define the Floer cohomology and show it is non-zero. Now we formally elaborate the construction via virtual techniques by Fukaya-Oh-Ohta-Ono \cite{FOOO, FOOO2020}. For a Lagrangian sphere in a general symplectic manifold, they have defined an $A_{\infty}$-structure $\{\mathfrak{m}_k\}$ on the de Rham complex $\Omega(S)$. If $\mathfrak{m}_1\circ\mathfrak{m}_1=0$, then they define the Floer cohomology $HF(S)$ as the homology of $(\Omega(S), \mathfrak{m}_1)$. In practice, the way to define $\mathfrak{m}_k$ is via an inductive process. Pick a sequence of energy bounds $E_1< E_2< \cdots \to +\infty$, for each $E_i$ they choose an almost complex structure $J_i$ to define $\mathfrak{m}_k^{i}$ by only considering $J_i$-holomorphic disks with boundary on $S$ and with energy less than $E_i$. Then the operator $\mathfrak{m}_k$ will finally be defined as a suitable homotopy limit of $\mathfrak{m}_k^{i}$. A nontechnical summary of this strategy is contained in \cite[Remark 22.26]{FOOO2020}. Back to our current situation, with the help of the above theorem by Welschinger, we choose $J_i$'s such that $S$ does not bound any non-constant $J_i$-holomorphic disks with energy less than $E_i$. This show that $\mathfrak{m}_1^{i}$ matches with the de Rham differential on $\Omega(S)$ for any $i$. After taking limit, we know $\mathfrak{m}_1$ is the de Rham differential and $HF(S)\neq 0$.

\end{proof}

\begin{remark}
    We add two remarks about the Floer cohomology of Lagrangian spheres via classical approaches.
    \begin{enumerate}
        \item Suppose that our symplectic manifold has a rational symplectic class $[\omega]\in H^2(M;\QQ)$. Then a Lagrangian sphere is rational in the sense of \cite[Definition 3.5]{CW}. Therefore, its Floer cohomology can be also defined via classical transversality method \cite{CM,CW}. We expect the above `no-holomorphic disk' theorem also implies that the Floer cohomology is non-zero.
        \item If our symplectic manifold is `very negative', then a Lagrangian sphere in it becomes strongly negative as in \cite[Definition 1.1]{La}. For example, a degree-$d$ hypersurface in $\CC P^n$ satisfies this condition when $d\geq 2n-1$. In this case, the Lazzarini structure theorem enables one to define $HF(S)$ and show it is non-zero; see \cite[Theorem D]{La}.
    \end{enumerate}

\end{remark}

\begin{proof}
    [Proof of Example \ref{ex:sphere}]
    When $n=1$, the hypersurface is a symplectic surface with genus larger than one. Hence the result follows from Usher \cite{Usher} and Kislev-Shelukhin \cite{KiS}. When $n\geq 3$, the existence results of Lagrangian spheres follow from the above work by Seidel and Biran-Jerby. Then we apply Theorem \ref{t:main} and Corollary \ref{co:sphere}. Actually, the theorems by Seidel and Biran-Jerby give many more examples in projective varieties beyond hypersurfaces.

    For $n=2$, we need to consider Lagrangian 2-spheres. Note that the sphere $S^2$ admits a non-degenerate Riemannian metric whose closed geodesics have Morse indices at least one. Hence it meets the assumptions of Theorem \ref{t:boundary proof} and Corollary \ref{co:sphere}.
\end{proof}

\begin{remark}
    In Section 6 of \cite{FOOO}, a spectral sequence is established to compute Lagrangian Floer cohomology for general Lagrangians. One should be able to use the computations therein to deduce more examples, especially for Lagrangians which are not spheres.
\end{remark}

\section{Super-heavy Lagrangian skeleton}

First let us recall the Biran-Giroux decomposition \cite{Biran,Gir}. For a closed integral symplectic manifold $(M^{2n},\omega)$, let $Y^{2n-2}$ be a closed symplectic hypersurface in $M$ whose Poincar\'e dual is $k[\omega]$ for some positive integer $k$. Then there is a symplectic decomposition
$$
(M, \omega)\cong (U, \omega_U)\sqcup L_Y
$$
where $U$ is a unit disk bundle over $Y$ with a standard symplectic form $\omega_U$, and $L_Y$ is the skeleton. This construction depends on further choices of metrics and connections; we refer to \cite[Section 2]{Biran} for more details. Fix a metric and let $\rho$ be the radius function on the disk bundle $U$. It induces a continuous map $\rho: M\to [0,1]$ such that $\rho^{-1}(0)=Y$, $\rho^{-1}(t)$ is the circle bundle of radius $t\in (0,1)$, and $\rho^{-1}(1)=L_Y$. More explicitly, the symplectic form on $\pi: U\to Y$ is 
\begin{equation}\label{eq:symp}
    \omega_U= \pi^*(\omega\mid_Y)+ \frac{1}{k}d(\rho^2\alpha),
\end{equation}
where $\alpha$ is a connection one-form on $U-Y$ with curvature $d\alpha= -\pi^*(\omega\mid_Y)$, see \cite{Biran}. Away from $Y$, we have $\omega= d((\rho^2/k -1)\alpha)$. From now on, we assume $k=1$ for notation simplicity. The proof for general $k$ follows from the same argument.

Set $R:=\rho^2$. It gives a collar coordinate on $M$, which is smooth when $R\in [0,1)$ and continuous at $R=1$. On $U-Y$ we have
$$
\omega= dR\wedge\alpha+ (R-1)d\alpha.
$$
For a function $H(R):U-Y\to \RR$ which only depends on $R$, its Hamiltonian vector field is
$$
X_H= -H'(R)\partial_{\alpha}
$$
where $\partial_{\alpha}$ is the vector field generating the rotation ($\RR/\ZZ$-action) on the fiber of the disk bundle with $\alpha(\partial_{\alpha})=1$. The closed orbits of $\partial_{\alpha}$ are multiple covers of circle fibers of the disk bundle. A primitive orbit has period one. Now consider a function $H(R):U\to \RR$ with
\begin{enumerate}
    \item $H'(R)>0$ and $H''(R)\leq 0$.
    \item $H(R)= \lambda R +C$ when $R\in [0,1/2)$ for some non-integer $\lambda$ and constant $C$.
    \item The set $\{R\mid H'(R)\in\ZZ\}$ is discrete.
\end{enumerate}
Then $H(R)$ is of Morse-Bott non-degenerate type. The one-periodic orbits of $H$ are
\begin{enumerate}
    \item Constant orbits along $Y$, with the Morse-Bott critical manifold being $Y$.
    \item Non-constant orbits on the slices $\{R\mid H'(R)\in\ZZ\}$, with the Morse-Bott critical manifold being the circle bundle over $Y$. Each non-constant orbit $\gamma$ is associate with an integer $w(\gamma)$, the \textit{multiple} of $\gamma$ over a circle fiber.
\end{enumerate}
Note that each of these orbits admit a natural capping $u_Y$ from the disk fiber. For constant orbits we take $u_Y$ to be constant cappings. We call them $Y$-cappings. Our situation is  the special case in \cite{BSV} when the divisor is smooth. Our $Y$-capping is written as $u_{out}$ there; see the paragraph after \cite[Proposition 1.15]{BSV}. For Morse-Bott non-degenerate orbits with cappings, one need to use the Robbin-Salamon index. However, we still write them as $CZ([\gamma,u_Y])$. The index is closely related to the multiple.

\begin{lemma}\label{l:index}
    With respect to the $Y$-cappings there hold
    \begin{enumerate}
        \item $\abs{CZ([\gamma,u_Y])+ 2w(\gamma)}\leq n$ for non-constant $\gamma$.
        \item $\abs{CZ([\gamma,u_Y])+ 2\lceil \lambda\rceil}\leq n$ for constant $\gamma$, where $\lceil \lambda\rceil$ is the ceiling function of $\lambda$.
    \end{enumerate}
\end{lemma}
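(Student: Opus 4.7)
The plan is to exploit the symplectic splitting $TM|_{U-Y} = \pi^*TY \oplus V$, where $V$ is the $2$-plane vertical subbundle tangent to the disk fibers of $\pi$. Along any $Y$-capping $u_Y$, which sits inside a single fiber disk, both summands admit canonical symplectic trivializations: the horizontal one by pulling back a unitary frame of $T_yY$ from the basepoint $y := \pi(u_Y(0))$, and the vertical one by the standard complex structure on the fiber. With respect to these trivializations the linearized Hamiltonian flow of $H(R)$ decomposes as a direct sum, and so the Robbin--Salamon index splits as
$$
CZ([\gamma, u_Y]) = CZ_h + CZ_v,
$$
reducing the problem to two independent computations.

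For the vertical contribution $CZ_v$ at a constant orbit, the local model $H = \lambda R + C$ on a fiber disk (with fiber Darboux coordinates $x,y$ and $R=x^2+y^2$) generates rotation by $-2\pi\lambda$ per unit time, so the time-$1$ linearized flow is the rotation $R_{-2\pi\lambda}$; the classical Conley--Zehnder index of such a rotation differs from $-2\lceil\lambda\rceil$ by at most $1$. For a non-constant orbit at level $R_0$ with $H'(R_0)\in \ZZ$, the vertical flow on $\partial_R\oplus\partial_\alpha$ is a shear (arising from $H''(R_0)\leq 0$) superimposed on a rotation recorded by $w(\gamma)$, and the Robbin--Salamon index evaluates to $-2w(\gamma)$ up to a $\pm 1$ correction coming from the degenerate tangent direction. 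I would import the rotation/shear index formulas from \cite{RS} rather than recompute them.

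For the horizontal contribution $CZ_h$, the point is that $X_H = -H'(R)\partial_\alpha$ is purely vertical, so it preserves the horizontal distribution, and the induced flow on a horizontal vector along the orbit is parallel transport in the connection $\alpha$. Because the orbit is contained in a single fiber and the chosen horizontal frame is pulled back from the basepoint $y$, this parallel transport is the identity, so the horizontal linearized flow is trivial and its unperturbed Robbin--Salamon index vanishes. The horizontal Morse--Bott critical submanifold has real dimension $\dim Y = 2n-2$, so any small Morse perturbation making the orbits non-degenerate shifts the horizontal Conley--Zehnder index by at most $(2n-2)/2 = n-1$, yielding $|CZ_h|\leq n-1$.

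Combining $|CZ_h|\leq n-1$ with the $\pm 1$ vertical errors gives the claimed bounds $|CZ + 2w(\gamma)|\leq n$ and $|CZ + 2\lceil\lambda\rceil|\leq n$. The main obstacle I anticipate is bookkeeping: keeping the sign conventions for $\partial_\alpha$, for the multiplicity $w(\gamma)$, and for the fiber rotation direction mutually consistent, and making sure that Morse--Bott corrections in the horizontal and vertical directions are counted separately without double-counting. If this becomes delicate I would fall back on the analogous index calculation carried out in \cite{BSV} in the same disk-bundle set-up and translate it into the sign conventions of the present paper.
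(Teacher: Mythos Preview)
Your splitting into horizontal and vertical contributions is exactly the local computation the paper defers to \cite[Lemma~4.22]{BSV}; the paper's own proof consists entirely of that citation together with the remark that the Hamiltonian here has the opposite shape to the one in \cite{BSV}, which flips the signs. In other words, your stated fallback plan \emph{is} the paper's proof, and your main argument is an unpacking of what that reference does.

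There is one confusion worth correcting. The lemma concerns the Robbin--Salamon index of the \emph{unperturbed} Morse--Bott orbit (the paper says so explicitly just before the lemma), and the Morse perturbation shift of size at most $n$ is applied \emph{separately}, later, in the proof of Theorem~\ref{t:skeleton}. Since you correctly argue that the unperturbed horizontal linearized flow is the identity path, its Robbin--Salamon index is exactly $0$, and combined with your vertical estimate this already gives $|CZ+2w(\gamma)|\leq 1\leq n$ (and likewise $|CZ+2\lceil\lambda\rceil|\leq 1\leq n$). The extra $n-1$ you introduce from a horizontal Morse perturbation does not belong in this lemma; if you keep it here and then apply the perturbation shift again in Theorem~\ref{t:skeleton}, you double-count. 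The stated bound of $n$ in the lemma is simply generous, not a reflection of a genuine horizontal spread at this stage.

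A minor terminological point: the horizontal flow along a fiber orbit is not ``parallel transport in the connection $\alpha$'' (parallel transport runs along base paths), but rather the pushforward under the $S^1$-action on the fiber, which preserves the horizontal distribution because the connection is $S^1$-invariant and acts trivially in the $\pi_*$-trivialization. Your conclusion that $CZ_h=0$ is unaffected.
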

\begin{proof}
    Since the orbits are multiple covers of some circle fiber, the computation follows from a local study. For example, see \cite[Lemma 4.22]{BSV}. The Hamiltonian function in \cite{BSV} has the opposite shape of ours: it achieves minimum on the skeleton and maximum on the hypersurface. Compare \cite[Figure 2]{BSV} with our Figure \ref{fig:Ham2}. Hence we have the opposite signs here. Our Floer theory conventions are the same as in \cite{BSV}.
\end{proof}

The actions are 

\begin{lemma}\label{l:action}
    With respect to the $Y$-cappings there hold
    \begin{enumerate}
        \item $\mathcal{A}_{H}([\gamma,u_Y])=H(0)$ for constant $\gamma$.
        \item $\mathcal{A}_{H}([\gamma,u_Y])= H(\gamma)- w(\gamma) R_0$ for non-constant $\gamma$ lying in the slice $R=R_0$.
    \end{enumerate}
\end{lemma}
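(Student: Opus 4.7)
The plan is to evaluate the two pieces of the action formula $\mathcal{A}_H([\gamma,u_Y]) = \int_0^1 H(\gamma(t))\,dt + \int u_Y^*\omega$ directly, using that $H$ depends only on $R$ and that $u_Y$ lies in a single disk fiber of $\pi:U\to Y$.

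For case (1), a constant orbit $\gamma$ sits at a point $y\in Y$ where $R=0$, and its $Y$-capping $u_Y$ is constant. The time integral contributes $H(0)$ and the symplectic area vanishes, giving $\mathcal{A}_H([\gamma,u_Y])=H(0)$.

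For case (2), on the slice $\{R=R_0\}$ the Hamiltonian is constant and $\gamma$ has period one, so $\int_0^1 H(\gamma(t))\,dt = H(R_0)$. For the area, I would use the global primitive already recorded in the excerpt: the identity $d\alpha=-\pi^*(\omega|_Y)$ rewrites $\omega_U = \pi^*(\omega|_Y) + d(R\alpha)$, where $R\alpha$ extends smoothly across $Y$ even though $\alpha$ does not. Since $u_Y$ maps into a single fiber, $\pi\circ u_Y$ is constant, so the $\pi^*(\omega|_Y)$-term pulls back to zero, and Stokes' theorem then gives
\begin{equation*}
\int u_Y^*\omega \;=\; \int_{\partial u_Y} R\alpha \;=\; R_0 \int_\gamma \alpha.
\end{equation*}
Using $X_H=-H'(R)\partial_\alpha$ together with $\alpha(\partial_\alpha)=1$, one computes $\int_\gamma \alpha = \int_0^1 \alpha(X_H(\gamma(t)))\,dt = -H'(R_0)$. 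Identifying $w(\gamma) = H'(R_0)$ (the convention consistent with the sign pattern in Lemma \ref{l:index}), this becomes $\int u_Y^*\omega = -w(\gamma)R_0$, and combining the two pieces yields $\mathcal{A}_H([\gamma,u_Y]) = H(R_0) - w(\gamma)R_0$.

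The only genuine subtlety is bookkeeping of signs: one must confirm that the convention for $w(\gamma)$ and the orientation of the $Y$-capping (as a disk bounding $\gamma$ inside the fiber, oriented compatibly with the flow of $X_H$) match those used in Lemma \ref{l:index}. Once these conventions are pinned down, the proof is a direct application of Stokes' theorem and the tautological equation $\alpha(X_H)=-H'(R)$, with no further input from Floer theory needed.
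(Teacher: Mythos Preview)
Your proof is correct and follows essentially the same route as the paper's: both compute the symplectic area $\int u_Y^*\omega$ directly on the disk fiber, yielding $-w(\gamma)R_0$. The paper phrases this geometrically---the $Y$-capping is a $w(\gamma)$-fold cover of the fiber disk with opposite orientation (since $X_H=-H'(R)\partial_\alpha$ points against $\partial_\alpha$), and the fiber disk of radius $R_0$ has area $R_0$---whereas you arrive at the same value via Stokes' theorem and the identity $\int_\gamma\alpha=-H'(R_0)=-w(\gamma)$; these are two sides of the same elementary computation.
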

\begin{proof}
The $Y$-capping of a non-constant orbit $\gamma$ is a disk covering the disk fiber $w(\gamma)$-times, with the opposite orientation since our $X_H$ is in the opposite direction of $\partial_\alpha$. Hence $\omega(u_Y)=- w(\gamma) R_0$.
\end{proof}

Now we prove Theorem \ref{t:skeleton}. The argument is similar to Theorem \ref{t:Livoulle proof}.

\begin{figure}
  \begin{tikzpicture}[yscale=0.8]
  \draw [<-] (0,4)--(6,4);

  \draw [dotted] (2,4)--(2,-2);
  \draw [dotted] (3,4)--(3,-2);
  \draw [dotted] (6,4)--(6,-2);
  \draw [dotted] (4,4)--(4,-2);
  \node [above] at (2,4) {$b$};
  \node [above] at (6,4) {$0$};

  \node [above] at (4,4) {$b-\epsilon$};
  \node [below] at (3,-2) {$b-\epsilon/2$};
  \node [left] at (0,4) {$R$};

  \draw [snake it] (0,3)--(2,3);
  \draw (2,3) to [out=0] (4,2);
  \draw (4,2)--(6,0);
  \draw (3.5,2.4)--(6,-2);
  \node [right] at (6.2,0) {$F$};
  \node [right] at (6.2,-2) {$G$};

  \end{tikzpicture}
  \caption{Hamiltonian functions in the collar region.}\label{fig:Ham2}
\end{figure}
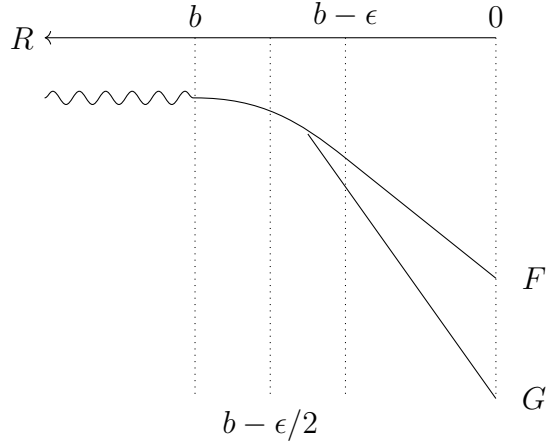

\begin{proof}[Proof of Theorem \ref{t:skeleton}]
    As mentioned above, we only prove the $k=1$ case.

    Set $W_b:=\{R\geq b\}\subset M$. It suffices to show $W_b$ is super-heavy for any $b\in(0,1)$. Fix $b$ and $\epsilon$ with $b-\epsilon>0$. Pick any non-positive function $H$ on $M$ that is zero on $W_{b-\epsilon}$. For any positive integer $m$, the function $mH$ is bounded from below by some $G$ which has the shape in Figure \ref{fig:Ham2}. In the following we will construct $G$ and show $c(1,G)$ is independent of $m$. Therefore $c(1,mH)$ is uniformly bounded and $\mu(H)=0$, which proves $W_{b-\epsilon}$ is super-heavy. The particular value of $b$ will not be used in the proof, this shows that $W_b$ is super-heavy for any $b\in(0,1)$. 

    Let $F:M\to \RR$ be a smooth function such that
    \begin{enumerate}
        \item $F$ only depends on $R$ when $R\in (0,b)$ and $F'(R)>0, F''(R)\leq 0$.
        \item The set $\{R\in(0,b)\mid F'(R)\in\ZZ\}$ is discrete.
        \item $F$ is a negative $C^2$-small Morse function when $R\geq b$.
        \item $F$ is linear in $R$ when $R\in (0,b-\epsilon/2)$, with a slope in $(10n, 10n+1)$.
    \end{enumerate}
    Then we also construct another function $G$ such that
    \begin{enumerate}
    \item $G$ only depends on $R$ when $R\in (0,b)$ and $G'(R)>0, G''(R)\leq 0$.
    \item $G=F$ when $R\in (b-\epsilon/2, 1]$.
    \item $G$ is linear in $R$ when $R\in (0,b-\epsilon)$, with a slope in $(10n+1, 10n+2)$.
    \end{enumerate}
Pictorial depiction of $F,G$ is in Figure \ref{fig:Ham2}. Both $F$ and $G$ are Morse-Bott non-degenerate. We apply time-dependent perturbations to make them non-degenerate. The resulting $F^*_t,G^*_t$ can be made that
\begin{enumerate}
    \item $G^*_t=F^*_t$ when $R\in (b-\epsilon/2, 1)$.
    \item For any capped orbit $[\gamma,u]$ of $F^*_t$ or $G^*_t$ it corresponds to a capped orbit $[\gamma',u']$ of $F$ or $G$ such that
    \begin{enumerate}
        \item $\abs{CZ([\gamma,u])-CZ([\gamma',u'])}\leq n$.
        \item $\abs{\mathcal{A}([\gamma,u])-\mathcal{A}([\gamma',u'])}$ is arbitrarily small.
    \end{enumerate} 
\end{enumerate}
This is essentially a higher dimensional version of the perturbation we used in Theorem \ref{t:Livoulle proof} to break the $S^1$-symmetry. Now the critical manifolds are $Y$ and the slices where $F'(R)\in\ZZ$, which we assume to be disjoint. Hence we can add on time-dependent perturbations locally. These perturbations are small Morse functions on the critical manifolds. After perturbation the non-degenerate orbits correspond to critical points of the Morse functions. The index and action change are computed in \cite[Lemma 4.17]{BSV}. We refer to it and the references therein for more details. 

Since $G^*_t=F^*_t$ when $R\in (b-\epsilon/2, 1)$, they share common orbits in this region. Let $\gamma$ be an orbit of $G^*_t$ in $\{0\leq R\leq b-\epsilon/2\}$. It comes from a perturbation of $\gamma'$ which is an orbit of $G$ in the same region. So $\gamma'$ is either a constant orbit on $Y$, or a non-constant orbit with $w(\gamma')=10n+1$. First we consider the case $\gamma'$ is non-constant. Choosing a $Y$-capping for $\gamma'$, we have $CZ([\gamma',u_Y])\leq -19n -2$ by Lemma \ref{l:index}. Hence $\gamma$ with a suitably chosen capping $u$ has $CZ([\gamma,u])\leq -18n -2$. The orbit $\gamma'$ lies in some slice $R=R_0$ with $R_0\in (b-\epsilon,b-\epsilon/2)$. By Lemma \ref{l:action}, we have 
$$
\mathcal{A}_G([\gamma',u_Y])= G(\gamma')- (10n+1) R_0\leq F(R_0)- (10n+1) R_0< \min F^*_t.
$$
The first inequality follows from $F\geq G$. The second inequality follows from a computation of $\min F$, given that $F$ is linear on $R\in (0,b-\epsilon/2)$ with a slope in $(10n, 10n+1)$. Since our perturbation is small, we still have that 
$$
\mathcal{A}_{G^*_t}([\gamma,u])< \min F^*_t.
$$
The same holds for a constant orbit $\gamma'$ because $F(0)>G(0)$. Hence we know that any orbit $\gamma$ of $G^*_t$ in $\{0\leq R\leq b-\epsilon/2\}$ has a capping $u$ such that
$$
\deg([\gamma,u])\leq -17n -2, \quad \mathcal{A}_{G_t}([\gamma,u])< \min F^*_t.
$$
For a general capping $u+B$ with $B\in \pi_2(M)$, if $\deg([\gamma,u+B])=0$ then $c_1(TM)(B)>0$ which implies that
$$
\mathcal{A}_{G^*_t}([\gamma,u+B])\leq \mathcal{A}_{G^*_t}([\gamma,u])-\epsilon_M< \min F^*_t -\epsilon_M,
$$
by Condition \ref{eq:condition}. Here $\epsilon_M >0$ is the rational constant defined in Theorem \ref{t:Livoulle proof}. In particular, we get
$$
Spec_0(G^*_{t})\cap (\min F^*_t -\epsilon', +\infty)= Spec_0(F^*_{t})\cap (\min F^*_t -\epsilon', +\infty)
$$
for some $0<\epsilon'<\epsilon_M$. It is a discrete subset of $\RR$ by our integral assumption. The rest of the proof is similar to Theorem \ref{t:Livoulle proof}. We interpolate from $F^*_t$ to $G^*_t$ by using a sequence of functions, with variations between adjacent ones being uniformly small. The Lipschitz property of spectral invariants give us that $c(1,F^*_t)=c(1,G^*_t)$. Then we can inductively repeat these steps to make the slope of $G$ goes to infinity. The interpolation process will show $c(1,G)$ stays the same.

\end{proof}

\begin{remark}\label{re:comp}
    One can see that the computation above only happens on $U$ and does not depend on the complement of $U$. What we actually showed is that if $U$ is an open subset in a rational $(M,\omega)$ satisfying Condition \ref{eq:condition} and the induced symplectic structure on $U$ is of the form (\ref{eq:symp}), then $M-U$ is super-heavy. In the Biran decomposition \cite[Theorem 2.6.A]{Biran}, the skeleton $\Delta_\mathcal{P}$ is defined via a polarization $\mathcal{P}$. Different polarizations give possibly different $\Delta_\mathcal{P}$. On the other hand, the complement of $\Delta_\mathcal{P}$ is symplectomorphic to the standard symplectic disk bundle for any polarization.
\end{remark}

We expect that by a more careful analysis as in \cite{BSV}, the above theorem could be extended to a symplectic manifold satisfying Condition \ref{eq:condition} with a simple normal crossings divisor.

\section*{Acknowledgements}
We benefited from conversations with Pierre-Alexandre Mailhot, Egor Shelukhin, Yusuke Kawamoto, Yoel Groman, Cheuk Yu Mak, Umut Varolgunes, Yanki Lekili, Noah Porcelli and Hang Yuan. We thank the referee for very helpful comments. The author is supported by EPSRC grant EP/W015889/1.

\bibliography{main}

\bibliographystyle{amsplain}

\end{document}